%
%
%
%
%
\RequirePackage{fix-cm}
\documentclass[smallextended]{svjour3}       
\smartqed  
\textwidth=16cm
\textheight=22cm
\usepackage{graphicx}
\usepackage{lineno,hyperref}
\usepackage{amsmath,amssymb}
\usepackage{enumerate}
\usepackage{multicol}
\usepackage{graphicx}
\usepackage{epsfig}
\usepackage{subfigure}
\usepackage{ntheorem}
\usepackage{color}
\numberwithin{equation}{section}
\numberwithin{table}{section}

\newtheorem{thm}{Theorem}[section] 
\newtheorem{defn}[thm]{Definition} 
\newtheorem{lem}[thm]{Lemma} 
\newtheorem{alg}[thm]{Algorithm}
\numberwithin{thm}{section}

\def\C{{\bf C}}

\newcommand{\be}{\begin{equation}}
\newcommand{\ee}{\end{equation}}

%
%
%
%
%
\begin{document}

\title{On   condition numbers of the total least squares problem with linear equality constraint\thanks{
This paper is supported in part by the National Natural Science Foundation of China under grants 12090011,
11771188; the Priority Academic Program Development Project (PAPD); the Top-notch Academic
Programs Project (No. PPZY2015A013) of Jiangsu Higher Education Institution.}
}


\author{Qiaohua Liu        \and
        Zhigang Jia
}


\institute{Qiaohua Liu \at
              Department of Mathematics, Shanghai University, Shanghai 200444, P.R. China\\
             \email{qhliu@shu.edu.cn}           
           \and
           Zhigang Jia \at
        School of Mathematics and Statistics, Jiangsu Normal University,
Xuzhou 221116, P. R. China\\
Research Institute of Mathematical Science, Jiangsu Normal University,
Xuzhou 221116, P. R. China\\
\email{zhgjia@jsnu.edu.cn}
}

\date{Received: date / Accepted: date}

\maketitle

\begin{abstract}
This paper is
devoted to   condition numbers  of the  total least squares  problem with linear equality constraint (TLSE).
With novel limit techniques, closed formulae for   normwise, mixed and componentwise condition numbers of the TLSE problem are derived.
Computable expressions and upper bounds for these condition numbers are also given to avoid the costly Kronecker product-based operations.
The results unify the ones for the TLS problem.
For TLSE problems with equilibratory input data,  numerical experiments  illustrate that normwise condition number-based estimate is sharp to
evaluate the forward error of the solution, while for sparse and badly scaled matrices,
mixed and componentwise condition numbers-based estimates  are much tighter.

\keywords{Total least squares problem with linear equality constraint\and Weighted total least squares problem; Condition number\and
 Perturbation analysis}

\subclass{   65F35 \and 65F20}
\end{abstract}

\section{Introduction}

In  many data fitting and estimation problems, total least squares model (TLS) \cite{pe} is used to find a ``best'' fit to the  overdetermined system $Ax\approx b$, where $A\in \mathbb{R}^{q\times n}(q>n)$   and $b\in \mathbb{R}^q$   are contaminated by some noise.
The model determines perturbations $E\in \mathbb{R}^{q\times n}$ to the coefficient matrix $A$ and $f\in \mathbb{R}^q$ to the vector $b$ such that
\begin{equation}
\min_{E,f}\|[E\quad f]\|_{F}, \qquad \mbox{subject\ to}\quad (A+E)x=b+f,\label{1.2}
\end{equation}
where $\|\cdot\|_F$ denotes the Frobenius norm of a matrix. The TLS model was carefully proposed in 1901 \cite{pe}. However it was  not extensively explored for a long time.
In recent forty years,  it has been widely applied in a broad class of scientific disciplines such
as system  identification \cite{ld}, image  processing \cite{nbk,npp},  speech and audio processing \cite{hv,lmv}, etc.
A wide range of concern on TLS owes to Golub and Van Loan, who introduced    the TLS model to the numerical linear algebra area in 1980 \cite{gv}, and they developed an algorithm for solving the TLS problem through  the singular value decomposition (SVD) of $[A~ b]$.   When $A$ is large,   a complete SVD will be very costly.  One improvement is to compute a partial SVD
based on Householder transformation \cite{va}   or Lanczos bidiagonalization introduced by Golub and Kahan  \cite{gv2}.  Another improvement is based on the
Rayleigh quotient iteration \cite{bj}, or Gauss-Newton iteration \cite{ff,slc}. Recently,
a randomized algorithm \cite{xxw2} presented by Xie el al. in 2018    greatly
reduces the computational time and still maintains good accuracy with very high probability.   For a comprehensive reading about the TLS model, we refer to \cite{mv,val,vh1}.

In 1992,   Dowling et al. \cite{ddl} studied the TLS model with  linear equality constraint (TLSE):
\begin{equation}
\min_{E,f} \|[E\quad f]\|_{F}, \quad \mbox{subject\ to}\quad (A+E)x=b+f,\quad Cx=d,\label{1.6}
\end{equation}
where $C\in {\mathbb R}^{p\times n}$ is of full row rank and $\Big[{C\atop A}\Big]$  has   full column rank.
They proposed to solve it on the  basis of QR and SVD matrix factorizations.
Further investigations on problem TLSE  were performed in  \cite{sc}, where   iteration methods  were derived based on the Euler-Lagrange theorem.
 Recently, Liu et al. \cite{liu} interpreted the TLSE solution as an approximation of the solution to an unconstrained weighted TLS  (WTLS) problem, with a large weight
 assigned on the constraint, based on which a QR-based inverse iteration (QR-INV) method was presented.

For the sensitive analysis of a problem,   the condition number measures
the worst-case sensitivity of its solution to small perturbations in the input data. Combined with backward error
estimate,   an approximate upper bound can be derived for the forward error, that is, the difference between a perturbed
solution and the exact solution.  There are a lot of work studying condition numbers of the standard TLS problem, see \cite{bg,jl,lj,mzw,wei0,wei,xxw,zlw,zmw}.
As far as we know,    condition numbers   of the TLSE problem
have not been studied in  the literature. As a continuation of the previous
work \cite{liu}, in this paper we will investigate this issue.

In \cite{sc}, by applying the method of Lagrange multipliers, Schaffrin   proved that the TLSE solution satisfies the following generalized eigenvalue problem
$$
\left[\begin{array}{ccc}
A^TA\quad&A^Tb\quad&C^T\\b^TA\quad&b^Tb\quad&d^T\\C\quad&d\quad&0\end{array}\right]\left[\begin{array}c x\\-1\\ \lambda\end{array}\right]=\nu_{\min}^2\left[\begin{array}{ccc} I_n\quad&&\\&1&\\&&0_p\end{array}\right]\left[\begin{array}c x\\-1\\\lambda\end{array}\right],
$$
where $\lambda$ is a quantity related to the vector of Lagrange multipliers, and
$\nu_{\min}^2$ is the smallest generalized eigenvalue. Notice that the associated matrix ${\rm diag}(I_{n+1}, 0_p)$  in the generalized eigenvalue problem  is positive semidefinite, which causes difficulties in the perturbation analysis of the TLSE problem. To the best of our knowledge, there are no good results about perturbation theory of generalized eigenvalue problem $Mx=\mu Nx$ with $N$ being positive semidefinite.

In \cite{ri}, Rice  gave a general theory of condition numbers. If $x=\psi(a)$ is continuous and Fr{\rm \'{ e}}chet differentiable function
mapping from ${\mathbb R}^s$ to ${\mathbb R}^t$, where $a$ is a parameter related to input data. For small perturbations $\delta a$,  denote $\delta x=\psi(a+\delta a)-\psi(a)$, then according to \cite{ri}, the relative normwise condition number of $\psi$ at $a$ is
$$ \kappa^{\rm rel}(\psi,a):=\lim_{\varepsilon \rightarrow 0}\sup_{\|\,\delta a\,\|_2\leq\varepsilon\|a\|_2}
\frac{\|\delta x\|_2/\|x\|_2}{\|\delta a\|_2/\|a\|_2}=\frac{\|\psi\prime(a)\|_2\|a\|_2}
{\|\psi(a)\|_2},
$$
where $\psi(a)\not =0$,  $\psi\prime(a)$ denotes the Fr$\acute{\rm e}$chet derivative \cite{ri} of $\psi$ at the point $a$.
In \cite{bg,jl,lj,mzw,xxw,zlw,zmw},
condition number of TLS problems  are generally based on Rice's theory.

 Although a closed form TLSE solution with Moore-Penrose inverse operation was given in \cite{liu}(see (\ref{2.4})),
it is not easy to get   a  simple  $\psi$ to derive its Fr\'{e}chet derivative   of the TLSE solution.
Fortunately,  the TLSE solution can be approximated by an unconstrained WTLS solution in a limit sense \cite{liu}, while
 the perturbation of the standard TLS problem is widely investigated.
  In the literature, there are many similar problems whose  perturbation analysis is derived based on a limit technique, say for
  equality constrained least squares problem  by Wei and De Pierro \cite{pw,wei2,wp}, Liu and Wei \cite{liuw}; for
   mixed least squares-total least squares problem   by Zheng and Yang \cite{zy}.
In this paper, we consider modifying the perturbation results in \cite{bg,jl,lj}
   to be accessible for our problem,  and then using
  the limit technique to derive the first order perturbation result of the TLSE solution, from which closed formulae
  of normwise, mixed and componentwise condition numbers are derived.

 The organization of this paper is as follows. In Section 2, we present some  preliminary results about TLS and TLSE problems. The first order perturbation result for the TLSE problem
 is investigated in Section 3. Moreover, the Kronecker-product-based normwise, mixed and componentwise condition number formulae for the TLSE problem are given.
 To make the formulae more computable, Kronecker-product-free upper bounds for the normwise, mixed, and componentwise condition numbers are presented in Section 4. The perturbation estimates and condition number formulae for the standard TLS problem can be recovered from our results.
In Section 5, some numerical examples are provided to  demonstrate that our upper bounds are tight. Some concluding remarks are given in Section 6.

 Throughout the paper,  $\|\cdot\|_2$ denotes the Euclidean vector or matrix  norm,  $I_n$, $0_n$, $0_{m\times n}$ denote the $n\times n$ identity matrix,  $n\times n$ zero matrix,
and $m\times n$ zero matrix, respectively. If subscripts are ignored, the sizes of identity and zero matrices are consistent with the context.
   For a matrix $M\in {\mathbb R}^{m\times n}$,  $M^T$, $M^\dag$, $\sigma_j(M)$ denote the transpose, the  Moore-Penrose inverse,
    the $j$-th largest singular value of $M$, respectively. {\rm vec}($M$) is an operator, which stacks the columns of $M$ one  underneath the other. The Kronecker product   of $A$ and $B$ is defined by $A\otimes B=[a_{ij}B]$ and its property is listed as follows \cite{gr,ls}:
$$
\begin{array}l
{\rm vec}(AXB)=(B^{T}\otimes A){\rm vec}(X),\qquad (A\otimes B)^{T}=A^{T}\otimes B^{T}.
\end{array}
$$

\section{Preliminaries}

In this section we first recall some well known results about TLS and TLSE problems \cite{ddl,gv}. In order to apply the TLS theory to analyze the perturbation of  problem  TLSE conveniently, we use $[L~ h]$ as the input data and write the TLS model as the following form:
\begin{equation}
\min\|[E\quad f]\|_F,\qquad {s.t.}\qquad (L+E)x=h+f,\label{1.0}
\end{equation}
where $L\in {\mathbb R}^{m\times n}$, $h\in {\mathbb R}^{m}$($m> n$).
Following \cite{gv}, if the SVD \cite[Chapter 2.4]{gv2} of $[L\quad h]$ is given by
\begin{equation}
[L\quad h]=U\Sigma V^T=\sum\limits_{i=1}^{n+1}\sigma_iu_iv_i^T,\quad \Sigma=\mbox{diag}(\sigma_1,\sigma_2,\cdots,\sigma_{n+1}),\label{1.01}
\end{equation}
where $\sigma_i=\sigma_{i}([L\quad h])$ and $\sigma_1\geq \sigma_2\geq \cdots\geq \sigma_{n+1}>0,$  then under the Golub-Van Loan's genericity condition \cite{gv}:
\begin{equation}\sigma_n(L)>\sigma_{n+1}([L\quad h])=\sigma_{n+1},\label{1.3}\end{equation}
the right singular vector $v_{n+1}$   contains  a nonzero last component, from which the TLS solution is uniquely determined by normalizing the last component to $-1$. Moreover  the TLS solution satisfies the following augmented system
\begin{equation}
\left[\begin{array}{cc}L^TL&~~L^Th
\\ h^TL&~~h^Th
\end{array}\right]\left[\begin{array}c
x\\ -1
\end{array}\right]=\sigma_{n+1}^2
\left[\begin{array}c
x\\ -1
\end{array}\right],\label{1.50}
\end{equation}
from which
\begin{equation}
L^Tr=\sigma_{n+1}^2x,\qquad for \qquad r=Lx-h,\label{1.5}
\end{equation}
and the closed form of the TLS solution can  be expressed by
\begin{equation}
x_{\rm tls}=(L^TL-\sigma_{n+1}^2I_n)^{-1}L^Th.\label{tls}
\end{equation}
Let $[L\quad h]$ be perturbed to   $[L+\Delta L\quad h+\Delta h]$, where  $\|[\Delta L\quad \Delta h]\|_F$ is sufficiently small such that
the genericity condition still holds for the perturbed TLS problem,  then for the unique solution  $\widehat x_{\rm tls}$
of the perturbed TLS problem,  Zhou et al.  \cite{zlw} first made the first order perturbation estimate based on (\ref{tls}) and gave the explicit expressions of condition numbers.
Li and Jia \cite{lj} presented a new and simple closed form formula with different approach and proved that $\Delta x=\widehat x_{\rm tls}-x_{\rm tls}$ satisfies
\begin{equation}
\Delta x=K{\rm vec}([\Delta L\quad \Delta h])+{\cal O}(\|[\Delta L\quad \Delta h]\|_F^2),\label{2.100}
\end{equation}
where  $x=x_{\rm tls}$ is the exact TLS solution, and with $r=Lx-h$, $P=L^{T}L- {\sigma}^{2}_{n+1}I_n$,
\begin{equation}
K=K_{\rm LJ}=P^{-1}\left(\left(\frac{2L^Trr^{T}}{\|r\|_2^2}-L^{T}\right)([x^{T}\quad -1]\otimes I_m)-[I_n \quad 0_{n\times 1}]\otimes r^{T}\right).\label{3.1.1}
\end{equation}
They also proved that this closed formula is equivalent to the one from Zhou et al. \cite{zlw}. In the same year, Baboulin and Gratton derived another formula  of $K$  in \cite{bg}:
$$
K_{\rm BG}=\big[-(x^{T}\otimes D)-(r^T\otimes P^{-1})\Pi_{m,n} \qquad D\big],
$$
where     $ D=P^{-1}(L^T-2{xr^T\over \rho^2} ),$  $\rho=(1+\|x\|_2^2)^{1\over 2}$,
and  $\Pi_{m,n}$ is a vec-permutation matrix such that ${\rm vec}(Z^T)=\Pi_{m,n}{\rm vec}(Z)$  for an arbitrary $m\times n$ matrix $Z$. In \cite{xxw}, Xie et al. proved that
$\|K_{\rm BG}\|_2=\|K_{\rm LJ}\|_2$ and the associated normwise condition numbers are equivalent. In \cite{liuc}, Liu et al.  proved that
\begin{equation}
K_{\rm BG}=\big[-(x^{T}\otimes D)-P^{-1}(I_n\otimes r^T ) \quad D\big]=K_{\rm LJ}.\label{2.001}
\end{equation}

For the TLSE problem (\ref{1.6}), denote $\widetilde A=[A ~ b], \widetilde C=[C\quad d]$.  In \cite{ddl},  the QR-SVD procedure for computing the TLSE solution first factorizes  $\widetilde C^T$ into the QR form:
\begin{equation}\widetilde C^T=\widetilde Q \widetilde R=[\widetilde Q_1\quad \widetilde Q_2]\left[{\widetilde R_1\atop 0}\right]=\widetilde Q_1\widetilde R_1,\quad\mbox{with}~ \widetilde Q_1\in{\mathbb R}^{ (n+1)\times p}, \widetilde R_1\in{\mathbb R}^{ p\times p},\label{1.60}
 \end{equation}
 and then computes the SVD of $\widetilde A\widetilde Q_2$ as
 \begin{equation}
 \widetilde A\widetilde Q_2=\widetilde U\widetilde\Sigma\widetilde V^T=\sum\limits_{i=1}^{n-p+1}\widetilde\sigma_i\widetilde u_i\widetilde v_i^T.\label{1.61}
\end{equation}
If $\widetilde Q_2\widetilde v_{n-p+1}$ contains a nonzero last component, a  TLSE solution $x=x_{\rm tlse}$ is   determined by normalizing the last component in $\widetilde Q_2\widetilde v_{n-p+1}$  to $-1$, i.e.,
 \begin{equation}
 \Big[{x\atop -1}\Big]=\rho \widetilde Q_2\widetilde v_{n-p+1},\label{2.2}
 \end{equation}
 where $\rho=(1+\|x\|_2^2)^{1/2}$ up to a factor $\pm 1$.

In \cite{liu}, Liu et al. carried out further investigations on the uniqueness condition and the explicit  closed form   for the TLSE solution.
  For the thin QR factorization of $C^T$:
  \begin{equation}
C^T=QR=[Q_1\quad Q_2]\left[{R_1\atop 0}\right]=Q_1R_1, \label{2.00}
\end{equation}
let $x_{\rm C}=C^\dag d=Q_1R_1^{-T}d$ be the minimum 2-norm solution to $Cx=d$ and set $r_{\rm C}=Ax_{\rm C}-b$. Note that  $Q_2$ and the specific matrix
\begin{equation}
\widetilde Q_2=\left[\begin{array}{cc}Q_2\quad&\zeta x_{\rm C}\\ 0\quad&-\zeta\end{array}\right],~\mbox{with}~~ \zeta=\Big(1+\|x_{\rm C}\|_2^2\Big)^{-1/2},\label{2.01}
\end{equation}
have orthonormal columns  and the spans of the columns are  the null space of $C$ and $\widetilde C$, respectively. Then under the  genericity condition
\begin{equation}
\sigma_{n-p}(AQ_2)=:\overline{\sigma}_{n-p}>\widetilde \sigma_{n-p+1}:=\sigma_{n-p+1}([AQ_2\quad \zeta r_{\rm C}]),\label{2.1}
\end{equation}
it was proved that there must be a  nonzero last component in $\widetilde Q_2\widetilde v_{n-p+1}$, and hence  the TLSE problem has a unique solution taking the form
\begin{equation}
x_{\rm tlse}=C^\dag d-Q_2S_{11}^{-1}Q_2^TA^Tr_{\rm C}=C_A^\dag d+{\cal K}A^Tb,\label{2.4}
\end{equation}
where $S_{11}= Q_2^TA^TAQ_2-\widetilde\sigma_{n-p+1}^2I_{n-p}$, and ${\cal K}=Q_2S_{11}^{-1}Q_2^T$. Let ${\cal P}=I_n-C^\dag C$,  then it is obvious that
$$
C_A^\dag=(I_n-{\cal K}A^TA)C^\dag,\qquad {\cal K}=({\cal P}(A^TA-\widetilde\sigma_{n-p+1}^2I_n){\cal P})^\dag.
$$
With this,   the TLSE solution can be regarded as the
limit case of the solution to an unconstrained weighted TLS (WTLS) problem \cite{liu}:
\begin{equation}
\min_{\hat E, \hat f}\|[\hat E\quad \hat f]\|_F \qquad \mbox{subject to}\qquad (L_\epsilon+\hat E)x_\epsilon=h_\epsilon+\hat f,\label{2.11}
\end{equation}
as $\epsilon$ tends to zero, where
\begin{equation}
L_\epsilon=W_{\epsilon}^{-1}L=\left[\begin{array}c \epsilon^{-1}C\\ A\end{array}\right],\qquad
h_\epsilon=W_{\epsilon}^{-1}h=\left[\begin{array}c \epsilon^{-1}d\\ b\end{array}\right],\quad \mbox{for}\quad W_{\epsilon}=\left[\begin{array}{cc} \epsilon I_p\quad&0\\0\quad& I_{q}\end{array}\right].\label{2.12}
\end{equation}
Under the genericity assumption (\ref{2.1}) and the assumption
\begin{equation}
0<2\epsilon^2\|\widetilde C^\dag\|_2^2\|\widetilde A\|_2^2<\overline \sigma_{n-p}^2-\widetilde\sigma_{n-p+1}^2, \label{2.14}
\end{equation}
the WTLS solution is uniquely determined by $x_\epsilon=(L_\epsilon^TL_\epsilon-\widetilde\sigma_\epsilon^2 I_n)^{-1}L_\epsilon^Th_\epsilon$, where
$\widetilde \sigma_\epsilon$ is the smallest singular value of $[L_\epsilon\quad h_\epsilon]$ and
\begin{equation}
\lim_{\epsilon\rightarrow 0+}\widetilde \sigma_\epsilon=\widetilde\sigma_{n-p+1},\qquad \lim_{\epsilon\rightarrow 0+}x_\epsilon=x_{\rm tlse}.\label{lim}
\end{equation}

In \cite{liu}, Liu et al. presented the QR-based inverse iteration (QR-INV) for above weighting problem to get the TLSE solution, in which the initial QR factorization of $[L_\epsilon\quad h_\epsilon]$ and the solution of two $(n+1)\times (n+1)$ triangular systems in each iteration loop are needed. This is costly  when $n$ increases.
A recent  study of the randomized TLS with
Nystr$\ddot{\rm o}$m scheme (NTLS)   proposed by Xie et al. \cite{xxw2} is also appropriate for solving above weighted TLS problem, and it is more adaptable than QR-INV in getting the minimum-norm solution, when $\widetilde A\widetilde Q_2$ has multiple smallest singular values, and the TLSE solution is not unique.

In the NTLS algorithm,  the known or estimated rank is set as the regularization parameter, and  the QR factorization and SVD are implemented on  much smaller matrices. Most flops are spent on the matrix-matrix multiplications, which are the so-called BLAS-3 operations,  resulting in potential efficiency of the algorithm. The algorithm is described as follows.\\

\begin{alg}[\cite{xxw2}]\label{alg2.1} Randomized algorithm for WTLS problem (\ref{2.11}) via Nystr$\ddot{\rm o}$m scheme (NWTLS).

Inputs: $C\in {\mathbb R}^{p\times n}, d\in {\mathbb R}^{p}, A\in {\mathbb R}^{q\times n}, b\in {\mathbb R}^{q}$, weighting factor $\epsilon$, and $k<\ell$($\ell\ll n$),
where $\ell$ is the sample size.

Output:  an approximated solution $x_{\rm nwtls}$ for the TLSE problem.

1. Solve  $(G^TG)X=\Omega$ where $G=[L_\epsilon\quad h_\epsilon]$ is defined in (\ref{2.12}), and $\Omega$ is an $(n+1)\times \ell$ Gaussian matrix generated via Matlab command {\sf randn}($n+1,\ell$).

2. Compute the $(n+1)\times \ell$ orthonormal matrix $Q$ via QR factorization $X=QR$.

3. Solve $(G^TG)Y=Q$, and form the $\ell\times \ell$ matrix $Z=Q^TY$.

4. Perform the Cholesky factorization $Z=J^TJ$, and seek $K$ by solving $KJ=Y$.

5. Compute the SVD: $K=V\Sigma U^T$, and  form the solution $x_{\rm nwtls}=-v(1:n)/v(n+1)$, where $v=V(:,1)$.
\end{alg}

\begin{remark} Algorithm \ref{alg2.1} is refined from the conventional randomized SVD (RSVD) of $(G^TG)^{-1}$. The RSVD   \cite{hmt} of a matrix $M\in {\mathbb R}^{m\times (n+1)}$  usually
starts from computing
 the orthonormal column basis of the projected matrix $M\Omega$ with  $\Omega$ is an $(n+1)\times \ell$ Gaussian matrix.  The approximate SVD of $  M\approx QQ^TM$ is then obtained by computing the SVD of a smaller matrix $Q^TM$. The RSVD algorithm approximates well  large  singular values and   corresponding singular
vectors \cite[Thms.10.5,10.6]{hmt}, especially for the matrix with fast decay rate in its  singular values.

In Algorithm  \ref{alg2.1}, for the sake of stability, the system $(G^TG)X=H$ could be solved by computing  the QR factorization of $G$ and then solving two $\ell\times \ell$ triangular systems, where the parameter $\ell=k+t$ stands for the number of sampling,   and  $k$ is the number of singular values and singular vectors we expect to compute.
According to the proof of \cite[Theorem 3.1]{liu}, we know that $G=[L_{\epsilon}\quad  h_\epsilon]$  has $p$ singular values of order ${\cal O}(\epsilon^{-1})$ and $n-p+1$ singular values approximating
those of $\widetilde A\widetilde Q_2$, therefore the matrix $(G^TG)^{-1}$ has at least $n-p+1$  dominant singular values much larger than those $p$   singular values of ${\cal O}(\epsilon^2)$.  In order
 for  a higher accuracy of the algorithm, we can choose $\ell=k+t$ with $k=n-p+1$, and the oversampling factor $t$, say $t=5$.
Moreover, if we can predict  the number  $s$  of   dominant   singular values  for $\widetilde A\widetilde Q_2$ that are much smaller than $n-p+1$,
 then we can choose the parameter  $k=s$  to reduce the computational cost.
\end{remark}

\section{Condition numbers of the TLSE problem}

Condition numbers  measure the sensitivity of the solution to the original data in problems, and they play an
important role in numerical analysis.  For the TLSE problem,  let $m=p+q$ and  $L, h$ be  defined in (\ref{2.12}), define the mapping  $\phi: {\mathbb R}^{m\times n}\times {\mathbb R}^m\rightarrow {\mathbb R}^n$ by
\begin{equation}
\phi([L\quad h])=x=x_{\rm tlse}.\label{3.1}
\end{equation}
According to Rice's theory of condition numbers \cite{ri},  the Fr\'{e}chet  derivative  $\phi'([L\quad h])$ such that
\begin{eqnarray}
\Delta x&=&\phi([L+\Delta L\quad h+\Delta h])-\phi([L\quad  h])\nonumber\\
&=&\phi'([L\quad  h])\cdot [\Delta L\quad \Delta h]+{\cal O}(\|[\Delta L\quad\Delta h]\|_F^2)\nonumber
\end{eqnarray}
is necessary. As pointed out previously,  there are some difficulties in constructing  a simple expression  $x=\phi([L\quad h])$  and computing  $\phi'([L\quad h])$ directly.
Instead, we  can start from the differentiability
of the weighted TLS solution $x_{\epsilon}$ and then take the limits as the parameter $\epsilon$ approaches
zero to get the various condition numbers of the TLSE solution.

To this end,  let $L, h, L_\epsilon, h_\epsilon$ be defined in (\ref{2.12}), and $\Delta L, \Delta h$ are the perturbations of $L$ and $h$, respectively. The weight matrix $W_\epsilon$ is not perturbed and therefore
 \begin{equation}
 \widehat L_\epsilon =L_\epsilon+\Delta L_\epsilon=W_\epsilon^{-1}(L+\Delta L), \qquad\widehat h_\epsilon=h_\epsilon+\Delta h_\epsilon=
W_\epsilon^{-1} (h+\Delta h),
\end{equation}
 where the norms  $\|[\Delta L\quad \Delta h]\|_F$,  $\|[\Delta L_\epsilon\quad \Delta h_\epsilon]\|_F$ of  perturbations are sufficiently small such that
 perturbed TLSE and WTLS problems have unique solutions $\widehat x_{\rm tlse}$ and  $\widehat x_\epsilon$, respectively.
In the limit sense,   $\widehat x_{\rm tlse}=\lim\limits_{\epsilon\rightarrow 0+}\widehat x_\epsilon$.

The following lemma is necessary in analyzing the first order perturbation analysis of the weighted TLS  solution $\widehat x_\epsilon$.\\

\begin{lem}\label{lem3.1}
  For the TLS problem defined in (\ref{1.0}), if the SVD of $[L\quad h]$ is given by (\ref{1.01}) and the genericity  condition (\ref{1.3}) still holds, then  we can express the first order perturbation result  in  (\ref{2.100})-(\ref{2.001}) as
$$\Delta x=K{\rm vec}([\Delta L\quad \Delta h])+{\cal O}(\|[\Delta L\quad \Delta h]\|_F^2),$$
where with  $P=L^{T}L- {\sigma}^{2}_{n+1}I_n$ and $G(x)=[x^{T}\quad -1]\otimes I_m$, the matrix $K$ has the following equivalent forms
 $$
 \begin{array}l
 K_{\rm LJ}=P^{-1}\left(2\sigma_{n+1}\rho^{-1}xu_{n+1}^TG(x)-L^TG(x)-\rho\sigma_{n+1}[I_n\quad 0_{n\times 1}]\otimes u_{n+1}^{T}\right),\\
 K_{\rm BG}=\big[-(x^{T}\otimes D)-\rho\sigma_{n+1}P^{-1}(I_n\otimes u_{n+1}^T )\qquad D\big].
 \end{array}
 $$
 Here  $ D=P^{-1}(L^T-{2\sigma_{n+1}\over \rho}xu_{n+1}^T )$ with $\rho=\sqrt{1+\|x\|_2^2}$ up to a sign $\pm 1$. The sign  is determined by the one of the $(n+1)$-th component of $v_{n+1}$, and  the value of $\rho u_{n+1}$ is unique and independent of the sign.

\end{lem}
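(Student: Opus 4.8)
The plan is to avoid re-deriving the first-order expansion of $\Delta x$ from scratch. The formula $K_{\rm LJ}$ in (\ref{3.1.1}) and the formula $K_{\rm BG}$ in (\ref{2.001}) are already established in the cited literature and are known to coincide, so it suffices to rewrite them in terms of the singular-value data $\sigma_{n+1}$, $u_{n+1}$ and $\rho$. The whole engine is a short dictionary translating the residual-based quantities $r$, $\|r\|_2^2$ and $L^Tr$ that appear in (\ref{3.1.1})--(\ref{2.001}) into that data.

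First I would fix the sign convention. Under the genericity condition (\ref{1.3}) the last entry of $v_{n+1}$ is nonzero, so the normalization producing the TLS solution $x$ amounts to $\left[{x\atop -1}\right]=\rho\,v_{n+1}$ for a scalar $\rho$ with $\rho^2=1+\|x\|_2^2$; I take this equation as the \emph{definition} of $\rho$, so that $\rho$ carries exactly the $\pm1$ ambiguity of $v_{n+1}$. Combining with $[L~h]v_{n+1}=\sigma_{n+1}u_{n+1}$ from (\ref{1.01}) gives
$$r=Lx-h=[L~h]\left[{x\atop -1}\right]=\rho\,[L~h]v_{n+1}=\rho\,\sigma_{n+1}u_{n+1}.$$
Hence $\|r\|_2^2=\rho^2\sigma_{n+1}^2=(1+\|x\|_2^2)\sigma_{n+1}^2$, while (\ref{1.5}) supplies $L^Tr=\sigma_{n+1}^2x$. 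Moreover $\rho u_{n+1}=r/\sigma_{n+1}$, and since both the residual $r$ of the unique TLS solution and the value $\sigma_{n+1}$ are intrinsic, $\rho u_{n+1}$ (and likewise $u_{n+1}/\rho$) is well defined independently of the sign chosen for $v_{n+1}$; this is precisely the last assertion of the lemma.

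Then I would substitute. In (\ref{3.1.1}), $\dfrac{2L^Trr^T}{\|r\|_2^2}=\dfrac{2\sigma_{n+1}^2xr^T}{\rho^2\sigma_{n+1}^2}=\dfrac{2}{\rho^2}xr^T=\dfrac{2\sigma_{n+1}}{\rho}xu_{n+1}^T$, and $[I_n~0_{n\times1}]\otimes r^T=\rho\sigma_{n+1}\big([I_n~0_{n\times1}]\otimes u_{n+1}^T\big)$; collecting terms and writing $G(x)=[x^T~-1]\otimes I_m$ gives the stated form of $K_{\rm LJ}$. Likewise, in (\ref{2.001}) one has $\dfrac{xr^T}{\rho^2}=\dfrac{\sigma_{n+1}}{\rho}xu_{n+1}^T$, so $D=P^{-1}\big(L^T-2\sigma_{n+1}\rho^{-1}xu_{n+1}^T\big)$, and $P^{-1}(I_n\otimes r^T)=\rho\sigma_{n+1}P^{-1}(I_n\otimes u_{n+1}^T)$, which yields the stated form of $K_{\rm BG}$; the equality $K_{\rm LJ}=K_{\rm BG}$ of the new expressions is then inherited from (\ref{2.001}).

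The only genuine subtlety, and thus the step I would treat most carefully, is the sign bookkeeping: $\rho$ alone and $u_{n+1}$ alone are each only defined up to $\pm1$, but the two ambiguities are locked together through $\left[{x\atop -1}\right]=\rho v_{n+1}$ and $[L~h]v_{n+1}=\sigma_{n+1}u_{n+1}$, so in every place $u_{n+1}$ occurs in the final formulae it is paired either as $\rho u_{n+1}$ or as $u_{n+1}/\rho$, both of which are sign-invariant. Once this is pinned down, the remainder is the routine algebra indicated above, together with an appeal to the already-proved expansion (\ref{2.100}).
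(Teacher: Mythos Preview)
Your proposal is correct and follows essentially the same approach as the paper's own proof: both arguments use $\left[{x\atop -1}\right]=\rho v_{n+1}$ to obtain $r=\rho\sigma_{n+1}u_{n+1}$, $\|r\|_2^2=\rho^2\sigma_{n+1}^2$, and then combine with $L^Tr=\sigma_{n+1}^2x$ from (\ref{1.5}) to rewrite $\dfrac{2L^Trr^T}{\|r\|_2^2}=\dfrac{2\sigma_{n+1}}{\rho}xu_{n+1}^T$ and substitute into (\ref{3.1.1}) and (\ref{2.001}). Your treatment of the sign ambiguity (that $\rho u_{n+1}=r/\sigma_{n+1}$ is intrinsic) is slightly more explicit than the paper's, but the logic is identical.
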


\begin{proof} Note that in (\ref{3.1.1}), $\Big[{x\atop -1}\Big]=\rho v_{n+1}$ for $\rho=\pm\sqrt{1+\|x\|_2^2}$ and
 $$r=Lx-h=\rho[ L\quad h]v_{n+1}=\rho\sigma_{n+1}u_{n+1}, \qquad \|r\|_2^2=\rho^2\sigma_{n+1}^2,$$
 from which we observe that the value of $\rho u_{n+1}$ is uniquely determined by $r/\sigma_{n+1}$. Without loss of generality, hereafter we take $\rho=\sqrt{1+\|x\|_2^2}$.
Combining with  (\ref{1.5}), we have
 $$
\frac{2L^Trr^{T}}{\|r\|_2^2}=\frac{2\sigma_{n+1}xu_{n+1}^T}{\rho}.
 $$
 By substituting the new expression of $r$ into (\ref{3.1.1}) and (\ref{2.001}), we complete the proof.\qed
 \end{proof}

\begin{lem}\label{lem3.2}
 Let $L_\epsilon, h_\epsilon$ and $\widetilde\sigma_\epsilon$ be defined in (\ref{2.12})-(\ref{2.14}). Assume that the QR factorization of $C^T$ is given by (\ref{2.00}),
then $\lim\limits_{\epsilon\rightarrow 0+}\widetilde\sigma_\epsilon=\widetilde\sigma_{n-p+1}$ and
$$
\begin{array}l
\lim\limits_{\epsilon\rightarrow 0+}(L_\epsilon^TL_\epsilon-\widetilde\sigma_\epsilon^2I_n)^{-1}=Q_2S_{11}^{-1}Q_2^T={\cal K},\\
\lim\limits_{\epsilon\rightarrow 0+}(L_\epsilon^TL_\epsilon-\widetilde\sigma_\epsilon^2I_n)^{-1}L^TW_\epsilon^{-2}= [C_A^\dag\quad {\cal K}A^T],
\end{array}
$$
where $S_{11}=Q_2^TA^TAQ_2-\widetilde\sigma_{n-p+1}^2I_{n-p}$, $C_A^\dag, {\cal K}$ are the same as  in (\ref{2.4}).
\end{lem}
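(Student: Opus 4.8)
The plan is to analyze the eigenstructure of the matrix pencil associated with the WTLS problem and pass to the limit $\epsilon\to 0^+$. First I would recall from the proof of \cite[Theorem 3.1]{liu} that the SVD of $[L_\epsilon\quad h_\epsilon]$ splits into two groups: $p$ singular values blowing up like ${\cal O}(\epsilon^{-1})$ coming from the $\epsilon^{-1}[C\quad d]$ block, and $n-p+1$ singular values converging to those of $[AQ_2\quad \zeta r_{\rm C}]$, i.e. to $\widetilde\sigma_1,\dots,\widetilde\sigma_{n-p+1}$. In particular $\widetilde\sigma_\epsilon$, the smallest, satisfies $\lim_{\epsilon\to0^+}\widetilde\sigma_\epsilon=\widetilde\sigma_{n-p+1}$, which is the first claim and also appears already in (\ref{lim}). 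For the two limits of the resolvent-type matrices, the key is to write $L_\epsilon^TL_\epsilon=\epsilon^{-2}C^TC+A^TA$ and observe that, because $C$ has full row rank, $C^TC$ has exactly the $p$-dimensional range equal to the row space of $C$, with ${\cal P}=I_n-C^\dag C$ the orthogonal projector onto its complement, namely the column space of $Q_2$.

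The main technical step is to block-diagonalize $L_\epsilon^TL_\epsilon-\widetilde\sigma_\epsilon^2I_n$ with respect to the orthogonal splitting ${\mathbb R}^n=\mathrm{range}(Q_1)\oplus\mathrm{range}(Q_2)$. Using the orthogonal matrix $[Q_1\quad Q_2]$, I would write
$$
[Q_1\quad Q_2]^T(L_\epsilon^TL_\epsilon-\widetilde\sigma_\epsilon^2I_n)[Q_1\quad Q_2]
=\left[\begin{array}{cc} \epsilon^{-2}R_1^{\,}R_1^T+Q_1^TA^TAQ_1-\widetilde\sigma_\epsilon^2I_p & Q_1^TA^TAQ_2\\ Q_2^TA^TAQ_1 & Q_2^TA^TAQ_2-\widetilde\sigma_\epsilon^2I_{n-p}\end{array}\right],
$$
invert via the Schur complement, and let $\epsilon\to0^+$. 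The $(1,1)$ block is dominated by $\epsilon^{-2}R_1R_1^T$, which is positive definite and blows up, so its inverse tends to $0$; hence all blocks of the inverse that involve it vanish except the Schur complement in the $(2,2)$ position, whose limit is $(Q_2^TA^TAQ_2-\widetilde\sigma_{n-p+1}^2I_{n-p})^{-1}=S_{11}^{-1}$. Conjugating back by $[Q_1\quad Q_2]$ gives $\lim_{\epsilon\to0^+}(L_\epsilon^TL_\epsilon-\widetilde\sigma_\epsilon^2I_n)^{-1}=Q_2S_{11}^{-1}Q_2^T={\cal K}$, which is the second claim; the invertibility of $S_{11}$ throughout is guaranteed by the genericity condition (\ref{2.1}) together with (\ref{2.14}).

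For the third claim I would compute $L^TW_\epsilon^{-2}=[\,C^T\quad A^T\,]\,\mathrm{diag}(\epsilon^{-2}I_p, I_q)=[\,\epsilon^{-2}C^T\quad A^T\,]$, so that
$(L_\epsilon^TL_\epsilon-\widetilde\sigma_\epsilon^2I_n)^{-1}L^TW_\epsilon^{-2}=[\,\epsilon^{-2}(L_\epsilon^TL_\epsilon-\widetilde\sigma_\epsilon^2I_n)^{-1}C^T\quad (L_\epsilon^TL_\epsilon-\widetilde\sigma_\epsilon^2I_n)^{-1}A^T\,]$. The second block tends to ${\cal K}A^T$ by the limit just established. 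For the first block the cancellation is more delicate: naively ${\cal K}A^T$ times a diverging $\epsilon^{-2}$, but ${\cal K}=Q_2S_{11}^{-1}Q_2^T$ kills the row space of $C$, so I expect $\epsilon^{-2}(L_\epsilon^TL_\epsilon-\widetilde\sigma_\epsilon^2I_n)^{-1}C^T$ to have a finite limit. To see it, use again the block form above and track the $\epsilon^{-2}C^T=\epsilon^{-2}Q_1R_1$ factor: in the $[Q_1\quad Q_2]$ coordinates this is $\epsilon^{-2}[\,R_1;\,0\,]$, and multiplying by the inverse of the block matrix the leading $\epsilon^{-2}$ is exactly absorbed by the $\epsilon^{-2}R_1R_1^T$ in the $(1,1)$ block. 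Carrying out the Schur-complement bookkeeping, the limit of the first block works out to $C^\dag - Q_2S_{11}^{-1}Q_2^TA^TAC^\dag = (I_n-{\cal K}A^TA)C^\dag = C_A^\dag$, using the identity for $C_A^\dag$ recorded just before (\ref{2.11}). Hence $\lim_{\epsilon\to0^+}(L_\epsilon^TL_\epsilon-\widetilde\sigma_\epsilon^2I_n)^{-1}L^TW_\epsilon^{-2}=[\,C_A^\dag\quad {\cal K}A^T\,]$.

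I expect the main obstacle to be precisely this last cancellation: making rigorous that the $\epsilon^{-2}$-divergence in $\epsilon^{-2}C^T$ is exactly matched (to leading order, with a clean $O(1)$ remainder) by the $\epsilon^{-2}$-growth in the $(1,1)$ block of $L_\epsilon^TL_\epsilon-\widetilde\sigma_\epsilon^2I_n$, uniformly as $\epsilon\to 0^+$. This needs the quantitative control on $\widetilde\sigma_\epsilon-\widetilde\sigma_{n-p+1}=O(\epsilon^2)$ (available from (\ref{2.14}) and the argument in \cite[Theorem 3.1]{liu}) so that $\widetilde\sigma_\epsilon^2I_p$ is a lower-order perturbation of the $(1,1)$ block, and a careful Schur-complement expansion keeping first-order terms in $\epsilon^2$. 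Everything else is routine continuity of matrix inversion on the set where the relevant matrices stay invertible, which (\ref{2.1}) and (\ref{2.14}) ensure.
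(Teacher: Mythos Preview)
Your proposal is correct and follows essentially the same route as the paper: transform $L_\epsilon^TL_\epsilon-\widetilde\sigma_\epsilon^2I_n$ to the $[Q_1\ Q_2]$ basis, observe the $(1,1)$ block is dominated by $\epsilon^{-2}R_1R_1^T$, and use a Schur-complement/block-elimination argument to pass to the limit. The paper's only cosmetic difference is that, for the delicate block $\epsilon^{-2}(L_\epsilon^TL_\epsilon-\widetilde\sigma_\epsilon^2I_n)^{-1}C^T$, it first absorbs the $\epsilon^{-2}$ by rewriting this as $(C^TC+\epsilon^2A^TA-\epsilon^2\widetilde\sigma_\epsilon^2I_n)^{-1}C^T$ and then sets up a $2\times 2$ block linear system (block Gaussian elimination) rather than quoting the Schur-complement inverse formula; this makes the cancellation you flag entirely transparent and avoids any need for the quantitative rate $\widetilde\sigma_\epsilon-\widetilde\sigma_{n-p+1}=O(\epsilon^2)$.
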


\begin{proof} The limits of $\widetilde\sigma_\epsilon$ and $(L_\epsilon^TL_\epsilon-\widetilde\sigma_\epsilon^2I_n)^{-1}$  are straightforward from the proof of Theorem 3.2 in \cite{liu}, by replacing $\mu$ there with $\epsilon^{-1}$.

For the last equality, we notice that the QR factorization of $C^T$ in (\ref{2.00}) gives
$$
\begin{array}{rl}
H_\epsilon:=\epsilon^{-2}(L_\epsilon^TL_\epsilon-\widetilde\sigma_\epsilon^2I_n)^{-1}C^T&=(C^TC+\epsilon^2A^TA-\epsilon^2\widetilde\sigma_\epsilon^2I_n)^{-1}C^T\\
&=Q[RR^T+\epsilon^2Q^T(A^TA-\widetilde\sigma_\epsilon^2I_n)Q]^{-1}R.\\
\end{array}
$$

Set $Z=Q^T(A^TA-\widetilde\sigma_\epsilon^2I_n)Q$, and partition $Z$ conformal with $RR^T$, then
$$
H_\epsilon=Q\left[\begin{array}{cc}
R_1R_1^T+\epsilon^2Z_{11}&\epsilon^2Z_{12}\\
\epsilon^2Z_{12}^T&\epsilon^2Z_{22}
\end{array}\right]^{-1}\left[\begin{array}c R_1\\0\end{array}\right]=:Q\left[\begin{array}c Y_\epsilon^{(1)}\\Y_\epsilon^{(2)}\end{array}\right],
$$
in which $Y_\epsilon^{(1)}$, $Y_\epsilon^{(2)}$ also satisfy
$$
\left[\begin{array}{cc}
R_1R_1^T+\epsilon^2Z_{11}&\epsilon^2Z_{12}\\
Z_{12}^T&Z_{22}
\end{array}\right]\left[\begin{array}c Y_\epsilon^{(1)}\\Y_\epsilon^{(2)}\end{array}\right]=\left[\begin{array}c R_1\\0\end{array}\right].
$$
Note that as $\epsilon$ tends to zero, $Z_{22}$ tends to $S_{11}$ which is nonsingular. By block Gaussian transformations to eliminate  $\epsilon^2Z_{12}$ to zero, we obtain
$$
\begin{array}l
\lim\limits_{\epsilon\rightarrow 0+}Y_\epsilon^{(1)}=\lim\limits_{\epsilon\rightarrow 0}[R_1R_1^T+\epsilon^2(Z_{11}-Z_{12}Z_{22}^{-1}Z_{12}^T)]^{-1}R_1=  R_1^{-T} ,\\
\lim\limits_{\epsilon\rightarrow 0+} Y_\epsilon^{(2)}=\lim\limits_{\epsilon\rightarrow 0}\Big(-Z_{22}^{-1}Z_{12}^TY_\epsilon^{(1)}\Big)= -S_{11}^{-1}(Q_1^TA^TAQ_2)^TR_1^{-T}.
 \end{array}
$$
Consequently,
$$
\begin{array}l
\lim\limits_{\epsilon\rightarrow 0+} H_\epsilon=Q_1R_1^{-T}-Q_2S_{11}^{-1}(Q_1^TA^TAQ_2)^TR_1^{-T}=C_A^\dag,\\
\lim\limits_{\epsilon\rightarrow 0+}(L_\epsilon^TL_\epsilon-\widetilde\sigma_\epsilon^2I_n)^{-1}L^TW_\epsilon^{-2}=\lim\limits_{\epsilon\rightarrow 0+}(L_\epsilon^TL_\epsilon-\widetilde\sigma_\epsilon^2I_n)^{-1}[\epsilon^{-2} C^T\quad A^T]\\
=\lim\limits_{\epsilon\rightarrow 0+}[H_\epsilon\quad (L_\epsilon^TL_\epsilon-\widetilde\sigma_\epsilon^2I_n)^{-1}A^T]=[C_A^\dag\quad  {\cal K}A^T],
\end{array}
$$
leading to the desired result. \qed
\end{proof}

\begin{lem}[\cite{st}] \label{lem3.3} Let an $n\times k$ matrix $X$ have full column rank $k$, and   $X$ be partitioned as $X=[X_1\quad X_2]$.
Denote $X_\epsilon=[X_1\quad \epsilon X_2]$, $Y=X_1^\dag X_2$ and $\overline X_2=X_2-X_1Y$. Then
to each singular value $s_{1}$ of $X_{1}$, there is associated a unique singular value $s_{1}^{(\epsilon)}$ of $X_{\epsilon}$ which satisfies
$s_{1}^{(\epsilon)}=s_{1}+O\left(\epsilon^{2}\right)$.
If $s_{1}$ is simple and its right singular vector is denoted by $v_{1},$ then the corresponding right singular vector of $X_{\epsilon}$ satisfies
$$
v_1^{(\epsilon)}=\left[\begin{array}{c}
{v_{1}+O\left(\epsilon^{2}\right)} \\
{\epsilon Y^{T} v_{1}+O\left(\epsilon^{3}\right)}
\end{array}\right],
$$
and the corresponding left singular vector  satisfies
$u_{1}^{(\epsilon)}=u_{1}+O(\epsilon^{2})$, where $u_{1}$ is  the left singular vector  $\overline X_2$.
Moreover, to each singular value $\bar s_2$ of $\overline X_2$, there is associated a unique singular value  $s_2^{(\epsilon)}$ of $X_\epsilon$ which satisfies
$s_2^{(\epsilon)}=\epsilon\bar s_2+O(\epsilon^3).$
If $\bar s_2$ is simple and its right singular vector is denoted by $\bar v_2$, then the corresponding right singular vector of $X_\epsilon$ satisfies
$$
v_2^{(\epsilon)}=\left[\begin{array}c -\epsilon Y\bar v_2+O(\epsilon^3)\\
\bar v_2+O(\epsilon^2)\end{array}\right],
$$
and the  corresponding left singular vector  satisfies $u_2^{(\epsilon)}=\bar u_2+O(\epsilon^2)$.
\end{lem}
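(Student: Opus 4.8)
\medskip
\noindent\textbf{Proof proposal.} The plan is to transfer the problem to the Gram matrix
$$
X_\epsilon^TX_\epsilon=\left[\begin{array}{cc}X_1^TX_1&\epsilon X_1^TX_2\\\epsilon X_2^TX_1&\epsilon^2X_2^TX_2\end{array}\right],
$$
whose eigenpairs are exactly the squared singular values $s^2$ and right singular vectors $v=[v^{(1)};v^{(2)}]$ of $X_\epsilon$, and to extract the asymptotics from its block structure (diagonal blocks of orders $1$ and $\epsilon^2$, off-diagonal blocks of order $\epsilon$). Full column rank of $X$ forces full column rank of $X_1$ and of $\overline X_2=(I-X_1X_1^\dag)X_2$, so $X_1^TX_1$ and $\overline X_2^T\overline X_2=X_2^T(I-X_1X_1^\dag)X_2$ are positive definite, and $X_\epsilon$ has full column rank for $\epsilon\neq0$. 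Since $\|X_\epsilon^TX_\epsilon-\diag(X_1^TX_1,0)\|=O(\epsilon)$, a continuity argument shows that for all small $\epsilon$ the $k$ eigenvalues of $X_\epsilon^TX_\epsilon$ split into $k_1$ of them bounded away from $0$ and $O(\epsilon)$-close to the eigenvalues of $X_1^TX_1$, and $k_2$ of them tending to $0$; this separation supplies the stated one-to-one correspondence of singular values and lets me treat the two groups separately.

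For the ``large'' group I would solve the second block row of $X_\epsilon^TX_\epsilon v=\lambda v$ for $v^{(2)}=\epsilon(\lambda I-\epsilon^2X_2^TX_2)^{-1}X_2^TX_1v^{(1)}$ (legitimate since $\lambda$ is bounded away from $0$) and substitute into the first block row to get the reduced eigenproblem $(X_1^TX_1+\epsilon^2R(\lambda,\epsilon^2))v^{(1)}=\lambda v^{(1)}$ with $R(\lambda,\eta)=X_1^TX_2(\lambda I-\eta X_2^TX_2)^{-1}X_2^TX_1$ bounded and analytic near $\eta=0$. Crucially this equation, together with the normalization, depends on $\epsilon$ only through $\epsilon^2$; since $s_1$ is simple, the implicit function theorem yields an analytic-in-$\epsilon^2$ branch $\lambda=s_1^2+O(\epsilon^2)$, $v^{(1)}=v_1+O(\epsilon^2)$, hence $s_1^{(\epsilon)}=s_1+O(\epsilon^2)$. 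Back-substitution makes $v^{(2)}$ an \emph{odd} function of $\epsilon$ whose leading coefficient, using $Y^T=X_2^TX_1(X_1^TX_1)^{-1}$ and $(X_1^TX_1)^{-1}v_1=s_1^{-2}v_1$, is $Y^Tv_1$; thus $v^{(2)}=\epsilon Y^Tv_1+O(\epsilon^3)$. The left singular vector then comes from $u_1^{(\epsilon)}=s_1^{-1}X_\epsilon v=s_1^{-1}(X_1v^{(1)}+\epsilon X_2v^{(2)})=s_1^{-1}X_1v_1+O(\epsilon^2)=u_1+O(\epsilon^2)$, with $u_1$ the left singular vector of $X_1$ attached to $s_1$.

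For the ``small'' group I would rescale, writing $\lambda=\epsilon^2\mu$, solve the first block row for $v^{(1)}=-\epsilon(X_1^TX_1-\epsilon^2\mu I)^{-1}X_1^TX_2v^{(2)}$, and substitute into the second block row to reach the reduced (nonlinear) eigenproblem $T(\epsilon^2\mu)v^{(2)}=\mu v^{(2)}$, where $T(\eta)=X_2^TX_2-X_2^TX_1(X_1^TX_1-\eta I)^{-1}X_1^TX_2$ and $T(0)=\overline X_2^T\overline X_2$. This equation again depends on $\epsilon$ only through $\epsilon^2$, and $\bar s_2^2$ is a simple eigenvalue of $T(0)$, so the implicit function theorem gives an analytic-in-$\epsilon^2$ branch $\mu=\bar s_2^2+O(\epsilon^2)$, $v^{(2)}=\bar v_2+O(\epsilon^2)$, whence $s_2^{(\epsilon)}=\epsilon\sqrt\mu=\epsilon\bar s_2+O(\epsilon^3)$; back-substitution makes $v^{(1)}$ odd in $\epsilon$ with leading coefficient $-Y\bar v_2$, i.e. $v^{(1)}=-\epsilon Y\bar v_2+O(\epsilon^3)$; and finally $u_2^{(\epsilon)}=(s_2^{(\epsilon)})^{-1}(X_1v^{(1)}+\epsilon X_2v^{(2)})=(\epsilon\bar s_2)^{-1}\epsilon(X_2-X_1Y)\bar v_2+O(\epsilon^2)=\bar s_2^{-1}\overline X_2\bar v_2+O(\epsilon^2)=\bar u_2+O(\epsilon^2)$.

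The heart of the matter---and where I expect the real care to go---is the parity bookkeeping that upgrades the naive $O(\epsilon)$ estimates to $O(\epsilon^2)$ and $O(\epsilon^3)$: one must verify that after eliminating $v^{(2)}$ (resp.\ $v^{(1)}$) the coupling blocks enter the reduced operator \emph{quadratically}, so the reduced eigenproblem and its normalization are genuine functions of $\epsilon^2$, which makes the perturbed simple eigenpair even in $\epsilon$ and the eliminated block odd in $\epsilon$; the gap argument that keeps the relevant eigenvalue isolated and simple for $\epsilon\neq0$ must also be invoked at the right places. A minor but necessary point is the sign/normalization convention: the unit eigenvectors $v_1,\bar v_2$ of simple eigenvalues, and the left singular vectors they induce, are defined only up to sign, so the displayed expansions are to be read with one coherent choice of signs throughout.
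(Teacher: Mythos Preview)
The paper does not prove Lemma~\ref{lem3.3}; it is quoted verbatim from Stewart~\cite{st} and used as a black box in the proof of Theorem~\ref{thm3.4}. So there is nothing to compare against in this paper. Your argument via the Gram matrix $X_\epsilon^TX_\epsilon$, block elimination to a Schur-complement-type reduced eigenproblem, and the observation that the reduced problem depends only on $\epsilon^2$ (hence the even/odd parity of the expansions) is correct and is essentially the standard route; Stewart's original proof proceeds along the same lines.

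One useful by-product of your computation is that it exposes a transcription error in the lemma as stated here: for the ``large'' branch you derive $u_1^{(\epsilon)}=s_1^{-1}X_1v_1+O(\epsilon^2)$, i.e.\ $u_1$ is the left singular vector of $X_1$ associated with $s_1$, not of $\overline X_2$ as the paper's statement says. This is indeed a typo in the paper (and is irrelevant to how the lemma is used later, since only the $u_2^{(\epsilon)}$ expansion is needed in Theorem~\ref{thm3.4}).

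A small caution on the implicit-function step: the reduced equation $(X_1^TX_1+\epsilon^2R(\lambda,\epsilon^2))v^{(1)}=\lambda v^{(1)}$ is a \emph{nonlinear} eigenvalue problem (since $R$ depends on $\lambda$), so the IFT must be applied to the joint system in $(\lambda,v^{(1)},\epsilon^2)$ with the normalization constraint, checking that the Jacobian is nonsingular at $\epsilon=0$ precisely because $s_1^2$ is simple for $X_1^TX_1$; the same remark applies to $T(\epsilon^2\mu)v^{(2)}=\mu v^{(2)}$. You flag this, but it is the one place where a reader might want the details spelled out.
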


{\begin{thm}\label{thm3.4}
Let  $C_A^\dag$, ${\cal K}$, $L, h$ be defined by  (\ref{2.4}) and (\ref{2.12}), respectively. Then with the notations  in (\ref{1.61})-(\ref{2.01}) and the genericity assumption (\ref{2.1}), $x=x_{\rm tlse}=\phi([L\quad h])$ is Fr$\acute{e}$chet differentiable in a neighborhood of $[L\quad h]$ and the first order estimate of $\Delta x$ is
$$
\begin{array}{rl}
\Delta x&=K_{L, h}{\rm vec}([\Delta L\quad \Delta h])+{\cal O}(\|[\Delta L\quad \Delta h]\|_F^2)\\
&=\Big(H_1G(x)-H_2\Big){\rm vec}([\Delta L\quad \Delta h])+{\mathcal O}(\|[\Delta L\quad \Delta h]\|_F^2),
\end{array}
$$
where    $G(x)=[x^T \quad -1]\otimes I_m$  for $m=p+q$ and
$$
\begin{array}l
  H_1={2\rho^{-2}{\cal K}xt^T}-[C_A^\dag\quad \quad {\cal K}A^T],\qquad
H_2={\cal K}\big([I_n\quad 0_{n\times 1}]\otimes t^T\big),
\end{array}
$$
for   $\rho=\sqrt{1+\|x\|_2^2}$,  $t^T=\big[-r^T(\widetilde A\widetilde C^\dag)  \quad r^T\big]$ with  $r=Ax-b$.
\end{thm}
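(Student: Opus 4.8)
The plan is to exploit the limit relation $x_{\rm tlse}=\lim_{\epsilon\to0^+}x_\epsilon$ of (\ref{lim}). For $[\Delta L\quad\Delta h]$ small enough the perturbed WTLS and TLSE problems still satisfy (\ref{2.1}) and the smallness assumption (\ref{2.14}), so applying (\ref{lim}) to the original and to the perturbed data gives $\Delta x=\widehat x_{\rm tlse}-x_{\rm tlse}=\lim_{\epsilon\to0^+}(\widehat x_\epsilon-x_\epsilon)=\lim_{\epsilon\to0^+}\Delta x_\epsilon$; hence it suffices to produce the first order expansion of $\Delta x_\epsilon$ and let $\epsilon\to0^+$. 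Since (\ref{2.11}) is a TLS problem (\ref{1.0}) with data $[L_\epsilon\quad h_\epsilon]$ and perturbation $[\Delta L_\epsilon\quad\Delta h_\epsilon]=W_\epsilon^{-1}[\Delta L\quad\Delta h]$, Lemma~\ref{lem3.1} gives $\Delta x_\epsilon=K_\epsilon\,{\rm vec}([\Delta L_\epsilon\quad\Delta h_\epsilon])+{\cal O}(\|[\Delta L_\epsilon\quad\Delta h_\epsilon]\|_F^2)$, where $K_\epsilon$ is the $K_{\rm LJ}$ matrix of Lemma~\ref{lem3.1} for this data, built from $P_\epsilon=L_\epsilon^TL_\epsilon-\widetilde\sigma_\epsilon^2I_n$, $\rho_\epsilon=\sqrt{1+\|x_\epsilon\|_2^2}$ and the smallest left singular vector $u_{n+1}^{(\epsilon)}$ of $[L_\epsilon\quad h_\epsilon]$.

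The first real step is to re-express $\Delta x_\epsilon$ through ${\rm vec}([\Delta L\quad\Delta h])$. Since ${\rm vec}([\Delta L_\epsilon\quad\Delta h_\epsilon])=(I_{n+1}\otimes W_\epsilon^{-1}){\rm vec}([\Delta L\quad\Delta h])$, I would simplify $K_\epsilon(I_{n+1}\otimes W_\epsilon^{-1})$ using $G(x_\epsilon)(I_{n+1}\otimes W_\epsilon^{-1})=[x_\epsilon^T\quad-1]\otimes W_\epsilon^{-1}$, the elementary identity $M([x_\epsilon^T\quad-1]\otimes W_\epsilon^{-1})=(MW_\epsilon^{-1})G(x_\epsilon)$ valid for any $M$ with $m$ columns, together with $L_\epsilon^TW_\epsilon^{-1}=L^TW_\epsilon^{-2}$ and $\rho_\epsilon\widetilde\sigma_\epsilon u_{n+1}^{(\epsilon)}=r_\epsilon:=L_\epsilon x_\epsilon-h_\epsilon$. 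A short manipulation then collapses $K_\epsilon(I_{n+1}\otimes W_\epsilon^{-1})$ to
$$
\Big(2\rho_\epsilon^{-2}(P_\epsilon^{-1}x_\epsilon)(r_\epsilon^TW_\epsilon^{-1})-P_\epsilon^{-1}L^TW_\epsilon^{-2}\Big)G(x_\epsilon)-P_\epsilon^{-1}\big([I_n\quad0_{n\times1}]\otimes(r_\epsilon^TW_\epsilon^{-1})\big).
$$

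Next I would pass to the limit $\epsilon\to0^+$ in this expression. Lemma~\ref{lem3.2} supplies $P_\epsilon^{-1}\to{\cal K}$ and $P_\epsilon^{-1}L^TW_\epsilon^{-2}\to[C_A^\dag\quad{\cal K}A^T]$, while (\ref{lim}) gives $x_\epsilon\to x$, $\rho_\epsilon\to\rho$, and hence $G(x_\epsilon)\to G(x)$; the only genuinely new ingredient is $\lim_{\epsilon\to0^+}r_\epsilon^TW_\epsilon^{-1}=t^T$. I would obtain this from Lemma~\ref{lem3.3} applied to the column-scaled matrix $X_\epsilon=[\widetilde C^T\quad\epsilon\widetilde A^T]=\epsilon[L_\epsilon\quad h_\epsilon]^T$, with $X_1=\widetilde C^T$ of full column rank and $X_2=\widetilde A^T$, so that $Y=X_1^\dag X_2=(\widetilde A\widetilde C^\dag)^T$ and $\overline X_2=\widetilde Q_2(\widetilde A\widetilde Q_2)^T$ has simple smallest singular value $\widetilde\sigma_{n-p+1}$ under (\ref{2.1}). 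Because the right singular vectors of $X_\epsilon$ are the left singular vectors of $[L_\epsilon\quad h_\epsilon]$, Lemma~\ref{lem3.3} gives, with the block sizes $p$ and $q$,
$$
u_{n+1}^{(\epsilon)}=\left[\begin{array}{c}-\epsilon(\widetilde A\widetilde C^\dag)^T\widetilde u_{n-p+1}\\ \widetilde u_{n-p+1}\end{array}\right]+{\cal O}(\epsilon^2).
$$
Combining $r_\epsilon=\rho_\epsilon\widetilde\sigma_\epsilon u_{n+1}^{(\epsilon)}$ with $W_\epsilon^{-1}={\rm diag}(\epsilon^{-1}I_p,I_q)$ and the identity $r=Ax-b=\widetilde A[x^T\quad-1]^T=\rho\widetilde\sigma_{n-p+1}\widetilde u_{n-p+1}$ (from (\ref{2.2}) and (\ref{1.61})), the two blocks of $r_\epsilon^TW_\epsilon^{-1}$ tend respectively to $-r^T\widetilde A\widetilde C^\dag$ and $r^T$, i.e., $r_\epsilon^TW_\epsilon^{-1}\to t^T$. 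Substituting all of these limits into the displayed expression yields $\lim_{\epsilon\to0^+}K_\epsilon(I_{n+1}\otimes W_\epsilon^{-1})=(2\rho^{-2}{\cal K}xt^T-[C_A^\dag\quad{\cal K}A^T])G(x)-{\cal K}([I_n\quad0_{n\times1}]\otimes t^T)=H_1G(x)-H_2=K_{L,h}$.

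It remains to justify that $\phi$ is Fr\'{e}chet differentiable at $[L\quad h]$, with derivative this $K_{L,h}$, and that the remainder is genuinely ${\cal O}(\|[\Delta L\quad\Delta h]\|_F^2)$. Differentiability I would get directly from the closed form (\ref{2.4}): under (\ref{2.1}), $\widetilde\sigma_{n-p+1}$ is a simple singular value of $\widetilde A\widetilde Q_2$, so it, the projector ${\cal P}=I_n-C^\dag C$, and ${\cal K}=({\cal P}(A^TA-\widetilde\sigma_{n-p+1}^2I_n){\cal P})^\dag$ (of locally constant rank $n-p$) all depend smoothly on the data in a neighborhood of $[L\quad h]$, hence so does $x_{\rm tlse}=C_A^\dag d+{\cal K}A^Tb$. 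The step I expect to be the main obstacle is the interchange of limits required to identify $\phi'([L\quad h])$ with $K_{L,h}$: one cannot simply carry the ${\cal O}(\|[\Delta L_\epsilon\quad\Delta h_\epsilon]\|_F^2)$ term through $\epsilon\to0^+$, since both $\|[\Delta L_\epsilon\quad\Delta h_\epsilon]\|_F\le\epsilon^{-1}\|[\Delta L\quad\Delta h]\|_F$ and the hidden constant blow up. I would resolve this by noting that on a small neighborhood $N$ of $[L\quad h]$ the maps sending $[\widehat L\quad\widehat h]$ to the weighted TLS solution with data $W_\epsilon^{-1}[\widehat L\quad\widehat h]$ are $C^1$, converge pointwise to $\phi$ by (\ref{lim}), and --- by repeating the computation above at a general data point, with the ${\cal O}(\epsilon^2)$ estimates of Lemma~\ref{lem3.3} uniform on $N$ --- have derivatives converging uniformly on $N$; uniform convergence of the derivatives then forces $\phi\in C^1(N)$ with $\phi'([L\quad h])=K_{L,h}$, and Taylor's theorem upgrades the first order estimate to the stated ${\cal O}(\|[\Delta L\quad\Delta h]\|_F^2)$. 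Carefully tracking this uniformity is the bookkeeping-intensive part of the argument.
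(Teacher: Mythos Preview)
Your proposal is correct and follows essentially the same route as the paper: apply Lemma~\ref{lem3.1} to the weighted TLS problem, push the weight $W_\epsilon^{-1}$ through the Kronecker structure, invoke Lemma~\ref{lem3.2} for the limits of $P_\epsilon^{-1}$ and $P_\epsilon^{-1}L^TW_\epsilon^{-2}$, and use Lemma~\ref{lem3.3} (applied to $[\widetilde C^T\ \ \epsilon\widetilde A^T]$) to identify the limit of the singular-vector block, which yields exactly $t^T$ and hence $K_{L,h}=H_1G(x)-H_2$. The only substantive difference is that you explicitly confront the interchange-of-limits issue (why $\phi'=\lim_{\epsilon\to0^+}K_\epsilon Z_\epsilon$ and why the remainder is genuinely ${\cal O}(\|[\Delta L\ \Delta h]\|_F^2)$), arguing via smoothness of (\ref{2.4}) and uniform convergence of the $\epsilon$-derivatives; the paper simply asserts this step, so your treatment is more careful but not a different method.
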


\begin{proof}
We first perform the  first order perturbation analysis of the weighted TLS problem in (\ref{2.11})-(\ref{2.12}). Define the mapping
$x_\epsilon=\varphi([L_\epsilon\quad h_\epsilon])$. From Lemma \ref{lem3.1}, we have
$$
\begin{array}{rl}
\Delta x_\epsilon&=\varphi\big([L_\epsilon\quad h_\epsilon]+[\Delta L_\epsilon\quad \Delta h_\epsilon]\big)-\varphi([L_\epsilon\quad h_\epsilon])\\
&=\varphi'([L_\epsilon\quad h_\epsilon])\cdot [\Delta L_\epsilon\quad \Delta h_\epsilon]
+{\cal O}(\|[\Delta L_\epsilon\quad\Delta h_\epsilon]\|_F^2),
\end{array}
$$
 and
\begin{equation}
\Delta x_\epsilon\approx\varphi'([L_\epsilon\quad h_\epsilon])\cdot [\Delta L_\epsilon\quad \Delta h_\epsilon]=K_\epsilon {\rm vec}([\Delta L_\epsilon\quad \Delta h_\epsilon])=K_\epsilon Z_\epsilon{\rm vec}([\Delta L\quad \Delta h]),\label{3.4}
\end{equation}
where $Z_\epsilon=I_{n+1}\otimes W_\epsilon^{-1}$ and with $P_\epsilon=L_\epsilon^T L_\epsilon-\widetilde\sigma_\epsilon^2 I_n$, $G(x_\epsilon)=[x_\epsilon^T\quad  -1]\otimes I_m$,
$$
 K_\epsilon=P_\epsilon^{-1}\left(2\sigma_\epsilon\rho_\epsilon^{-1}x_\epsilon u_\epsilon^TG(x_\epsilon)-L_\epsilon^TG(x_\epsilon)-
 \rho_\epsilon\sigma_{\epsilon}[I_n\quad 0_{n\times 1}]\otimes u_{\epsilon}^{T}\right).
 $$
Here $u_\epsilon$ is the left  singular vector corresponding to the smallest nonzero singular
value $\widetilde \sigma_\epsilon$ of $\widetilde L_\epsilon:=[L_\epsilon\quad  h_\epsilon]$ and $\rho_\epsilon=(1+\|x_\epsilon\|_2^2)^{1/2}$ up to a factor $\pm 1$.

By taking the limit in (\ref{3.4}), we conclude that $x_{\rm tlse}=\phi([L\quad h])$ satisfies
$$
\phi'([L\quad h])=\lim\limits_{\epsilon\rightarrow 0+} K_\epsilon Z_\epsilon.
$$
 To prove $ \phi'([L\quad h])=K_{L, h}$, we  note that for any matrix $M_1\in {\mathbb R}^{n\times m}$, $M_2\in {\mathbb R}^{n\times m}$,
$$
 \begin{array}l
 M_1G(x_\epsilon)Z_\epsilon {\rm vec}([\Delta L\quad \Delta h])=M_1\Big([x_\epsilon^T\quad -1]\otimes W_\epsilon^{-1}\Big){\rm vec}([\Delta L\quad \Delta h])\\
 \qquad\quad\qquad\qquad \qquad \qquad=(M_1W_\epsilon^{-1})\Big([x_\epsilon^T\quad -1]\otimes I_m\Big){\rm vec}([\Delta L\quad \Delta h]),\\
 M_2\Big([I_n, 0_{n\times 1}]\otimes u_\epsilon^T\Big)Z_\epsilon{\rm vec}([\Delta L\quad \Delta h])=M_2\Big([I_n\quad 0_{n\times 1}] \otimes (u_\epsilon^TW_\epsilon^{-1})\Big){\rm vec}([\Delta L\quad \Delta h]).
  \end{array}
$$
Then
$$
\phi'([L\quad h])=\lim\limits_{\epsilon\rightarrow 0+} P_\epsilon^{-1}\Big[\Big(2\sigma_\epsilon\rho_\epsilon^{-1}x_\epsilon  u_\epsilon^TW_\epsilon^{-1}-L^TW_\epsilon^{-2}\Big)G(x_\epsilon)-\rho_\epsilon\sigma_\epsilon[I_n\quad 0_{n\times 1}] \otimes (u_\epsilon^TW_\epsilon^{-1})\Big].
$$
Here $u_\epsilon$ is the right singular vector of $[L_\epsilon\quad h_\epsilon]^T=[\epsilon^{-1}\widetilde C^T\quad \widetilde A^T]$ or $\widetilde Q^T[\widetilde C^T\quad \epsilon\widetilde A^T]$, corresponding to its smallest nonzero singular value, where $\widetilde Q$ is defined in (\ref{1.60}).   Take $X_1=\widetilde Q^T\widetilde C^T=\Big[{\widetilde R_1\atop 0}\Big]$, $X_2=\widetilde Q^T\widetilde A^T$  in Lemma \ref{lem3.3}, we obtain
$$
u_\epsilon=\left[{-\epsilon\widetilde C^{\dag^T}\widetilde A^T\bar u+{\cal O}(\epsilon^3)\atop \bar u+{\cal O}(\epsilon^2)}\right],
$$
where $\bar u$ is the left singular vector of $\widetilde A\widetilde Q_2$ corresponding to its smallest nonzero singular value and it is exactly $\widetilde u_{n-p+1}$   according to the SVD in (\ref{1.61}).

  Note that  $\widetilde u_{n-p+1}$ has a close relation to the residual vector $r$ by (\ref{1.61})-(\ref{2.2}), as revealed below
\begin{equation}
\rho\widetilde \sigma_{n-p+1}\widetilde u_{n-p+1}=\rho\widetilde A\widetilde Q_2\widetilde v_{n-p+1}=\widetilde A\left[{x\atop -1}\right]=r.\label{1.62}
\end{equation}
Combining (\ref{1.62}) with (\ref{lim}) and  Lemma \ref{lem3.2}, and for the terms in $\phi'([L\quad h])$  we obtain
$$
\begin{array}l
  \lim\limits_{\epsilon\rightarrow 0+} {2\sigma_\epsilon \rho_\epsilon^{-1}} P_\epsilon^{-1}x_\epsilon u_\epsilon^TW_\epsilon^{-1}G(x_\epsilon)\\
\qquad =2\widetilde\sigma_{n-p+1}\rho^{-1}{\cal K}x\widetilde u_{n-p+1}^T[-\widetilde A\widetilde C^\dag\quad  I_q]G(x)\\
\qquad ={2\rho^{-2}}{\cal K}xr^T[-\widetilde A\widetilde C^\dag\quad I_q]G(x)={2\rho^{-2}}{\cal K}xt^TG(x),\\
  \lim\limits_{\epsilon\rightarrow 0+} P_\epsilon^{-1}L^TW_\epsilon^{-2}G(x_\epsilon)=[C_A^\dag\quad  {\cal K}A^T]G(x),\\
  \lim\limits_{\epsilon\rightarrow 0+}\rho_\epsilon\sigma_\epsilon P_\epsilon^{-1}[I_n\quad 0_{n\times 1}] \otimes (u_\epsilon^TW_\epsilon^{-1})\\ \qquad =\rho\widetilde\sigma_{n-p+1}{\cal K}\Big([I_n\quad 0_{n\times 1}]\otimes (\widetilde u_{n-p+1}^T[-\widetilde A\widetilde C^\dag\quad  I_q])\Big)={\cal K}\big([I_n\quad 0_{n\times 1}]\otimes t^T\big).
\end{array}
$$
The assertion in the theorem then follows.\qed
 \end{proof}

By applying the similar technique on  (\ref{2.001}), we can prove another form of the perturbation result. The result is listed below, in which
$\overline K_{L,h}$   is equivalent to  $ K_{L,h}$.

\begin{thm}\label{thm3.5}
   With the notations  in Theorem \ref{thm3.4},  the first order estimate of the TLSE solution $x$ is
$$
\begin{array}{rl}
\Delta x&= \overline K_{L, h} {\rm vec}([\Delta L\quad \Delta h])+{\cal O}(\|(\Delta L\quad \Delta h)\|_F^2)\\\\
&=[(x^T\otimes H_1)- \overline H_2 \quad -H_1]\left[{\displaystyle {\rm vec}(\Delta L)\atop \displaystyle {\rm vec}(\Delta h)}\right]+{\cal O}(\|[\Delta L\quad  \Delta h]\|_F^2),
\end{array}
$$
where  $\overline H_2={\cal K}(I_n\otimes t^T)$.
\end{thm}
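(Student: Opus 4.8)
The plan is to repeat the argument of Theorem~\ref{thm3.4} almost verbatim, but starting from the second ($K_{\rm BG}$) representation of the weighted TLS perturbation matrix in Lemma~\ref{lem3.1} instead of the first, which is exactly the ``similar technique on (\ref{2.001})'' announced before the statement. Applying Lemma~\ref{lem3.1} to the weighting problem (\ref{2.11})--(\ref{2.12}), the first order estimate of its solution is
$$
\Delta x_\epsilon\approx K_\epsilon Z_\epsilon\,{\rm vec}([\Delta L\quad \Delta h]),\qquad Z_\epsilon=I_{n+1}\otimes W_\epsilon^{-1},
$$
where, with $P_\epsilon=L_\epsilon^TL_\epsilon-\widetilde\sigma_\epsilon^2I_n$, $\sigma_\epsilon=\widetilde\sigma_\epsilon$, $\rho_\epsilon=(1+\|x_\epsilon\|_2^2)^{1/2}$ and $u_\epsilon$ the left singular vector of $[L_\epsilon\quad h_\epsilon]$ associated with $\widetilde\sigma_\epsilon$ (precisely the quantities already introduced in the proof of Theorem~\ref{thm3.4}),
$$
K_\epsilon=\big[\,-(x_\epsilon^T\otimes D_\epsilon)-\rho_\epsilon\sigma_\epsilon P_\epsilon^{-1}(I_n\otimes u_\epsilon^T)\quad D_\epsilon\,\big],\qquad D_\epsilon=P_\epsilon^{-1}\big(L_\epsilon^T-2\sigma_\epsilon\rho_\epsilon^{-1}x_\epsilon u_\epsilon^T\big).
$$
Since $\Delta h$ is a single column, ${\rm vec}([\Delta L\quad \Delta h])$ is literally $[\,{\rm vec}(\Delta L)^T\quad {\rm vec}(\Delta h)^T\,]^T$, so it suffices to determine the limits, as $\epsilon\to0+$, of the first $mn$ columns and of the last $m$ columns of $K_\epsilon Z_\epsilon$ separately.

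The next step is to push $Z_\epsilon$ through the Kronecker structure using $(A\otimes B)(C\otimes D)=AC\otimes BD$ together with $L_\epsilon^TW_\epsilon^{-1}=L^TW_\epsilon^{-2}$. This turns the coefficient of ${\rm vec}(\Delta L)$ into $-(x_\epsilon^T\otimes D_\epsilon W_\epsilon^{-1})-\rho_\epsilon\sigma_\epsilon P_\epsilon^{-1}(I_n\otimes u_\epsilon^TW_\epsilon^{-1})$ and the coefficient of ${\rm vec}(\Delta h)$ into $D_\epsilon W_\epsilon^{-1}=P_\epsilon^{-1}L^TW_\epsilon^{-2}-2\sigma_\epsilon\rho_\epsilon^{-1}P_\epsilon^{-1}x_\epsilon u_\epsilon^TW_\epsilon^{-1}$. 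Every limit now needed has already been computed in the proof of Theorem~\ref{thm3.4}: by (\ref{lim}) $x_\epsilon\to x$; by Lemma~\ref{lem3.2} $P_\epsilon^{-1}\to{\cal K}$ and $P_\epsilon^{-1}L^TW_\epsilon^{-2}\to[C_A^\dag\quad{\cal K}A^T]$; from Lemma~\ref{lem3.3} and (\ref{1.62}), $u_\epsilon^TW_\epsilon^{-1}\to\widetilde u_{n-p+1}^T[-\widetilde A\widetilde C^\dag\quad I_q]$ with $\rho_\epsilon\sigma_\epsilon\to\rho\widetilde\sigma_{n-p+1}$ and $\rho\widetilde\sigma_{n-p+1}\widetilde u_{n-p+1}=r$; hence $2\sigma_\epsilon\rho_\epsilon^{-1}P_\epsilon^{-1}x_\epsilon u_\epsilon^TW_\epsilon^{-1}\to2\rho^{-2}{\cal K}xt^T$ and $\rho_\epsilon\sigma_\epsilon P_\epsilon^{-1}(I_n\otimes u_\epsilon^TW_\epsilon^{-1})\to{\cal K}(I_n\otimes t^T)=\overline H_2$.

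Collecting terms gives $D_\epsilon W_\epsilon^{-1}\to[C_A^\dag\quad{\cal K}A^T]-2\rho^{-2}{\cal K}xt^T=-H_1$, so the block of $\lim_{\epsilon\to0+}K_\epsilon Z_\epsilon$ multiplying ${\rm vec}(\Delta h)$ is $-H_1$, while the block multiplying ${\rm vec}(\Delta L)$ is $-(x^T\otimes(-H_1))-\overline H_2=(x^T\otimes H_1)-\overline H_2$. Letting $\epsilon\to0+$ in $\Delta x_\epsilon$ and using $\widehat x_{\rm tlse}=\lim_{\epsilon\to0+}\widehat x_\epsilon$ then yields the stated first order formula with $\overline K_{L,h}=[(x^T\otimes H_1)-\overline H_2\quad -H_1]$. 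Finally, since (\ref{2.001}) asserts $K_{\rm BG}=K_{\rm LJ}$ before any limit is taken, that identity survives the limit and gives $\overline K_{L,h}=K_{L,h}$; alternatively one checks $\overline K_{L,h}=K_{L,h}$ directly by the same Kronecker manipulations that prove (\ref{2.001}), applied to the limiting matrices $H_1$, $H_2$, $\overline H_2$.

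I do not expect a genuine obstacle, because the delicate analysis has all been done in Theorem~\ref{thm3.4}: the singular value/vector expansions of Lemma~\ref{lem3.3}, the crucial cancellation of the $\epsilon^{-1}$ factor in $W_\epsilon^{-1}$ against the ${\cal O}(\epsilon)$ leading block of $u_\epsilon$ (which is exactly what makes $u_\epsilon^TW_\epsilon^{-1}$ converge to a finite limit), and the convergence of $P_\epsilon^{-1}L^TW_\epsilon^{-2}$ from Lemma~\ref{lem3.2}. The only points requiring care are bookkeeping: keeping track of which Kronecker factor absorbs $W_\epsilon^{-1}$; confirming that the ${\cal O}(\epsilon^2)$ and ${\cal O}(\epsilon^3)$ remainders in the expansions of $x_\epsilon$ and $u_\epsilon$ drop out in the limit, so that the limit may legitimately be interchanged with the first order identity; and noting that the sign ambiguity $\rho=\pm\sqrt{1+\|x\|_2^2}$ is immaterial, because only the sign-invariant combinations $\rho u_\epsilon$ and $x_\epsilon$ ever appear.
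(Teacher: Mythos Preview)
Your proposal is correct and follows precisely the route the paper indicates: it applies the limiting procedure of Theorem~\ref{thm3.4} to the $K_{\rm BG}$ form in Lemma~\ref{lem3.1} (i.e., to (\ref{2.001})) rather than to the $K_{\rm LJ}$ form, and the block-by-block limits you compute are exactly the ones already established there. The paper itself does not spell out a proof beyond the one-line remark preceding the statement, so your write-up is a faithful expansion of that remark.
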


Let $\alpha, \beta$ be
  positive numbers, for the data space ${\mathbb R}^{m\times n}\times {\mathbb R}^{m}$,  define  the flexible norm
$$
\|[E\quad f]\|_{\cal F}=\sqrt{\alpha^2 \|E\|_F^2+\beta^2\|f\|_2}~,
$$
which is convenient to monitor the perturbations on $E$ and $f$. For instance, large values of $\alpha$ (resp. $\beta$)
enable to obtain condition number problems where mainly $f$ (resp. $E$) is perturbed. The idea of using parameter to unify the perturbations and condition numbers was first proposed in \cite{gr2}, and then used or extended by \cite{wdq,yw}.

\begin{defn}
  Let $[L\quad h]$ be defined in (\ref{2.12}), and $[\Delta L\quad \Delta h]$ is the perturbation to $[L\quad h]$. Denote $\Delta x=\phi\big([L\quad h]+[\Delta L\quad\Delta h]\big)-\phi([L\quad h])$, and define the normwise, mixed and componentwise
condition numbers as follows
$$
\begin{array}{rl}
 \kappa_{\rm n}&=\lim\limits_{\eta\rightarrow 0}\sup\Big\{{\displaystyle\|\Delta x\|_2\over\displaystyle \eta \|x\|_2}:
 \|[\Delta L \quad\Delta h]\|_{\cal  F}\le \eta \|[L\quad h]\|_{\cal   F}\Big\},\\
\kappa_{\rm m}&=\lim\limits_{\eta\rightarrow 0}\sup\Big\{{\displaystyle\|\Delta x\|_\infty\over\displaystyle \eta \|x\|_\infty}:
|[\Delta L\quad\Delta h]|\le \eta |[L\quad h]|\Big\},\\
\kappa_{\rm c}&=\lim\limits_{\eta\rightarrow 0}\sup\Big\{{\displaystyle 1\over \displaystyle \eta}\|{\displaystyle\Delta x\over\displaystyle  x}\|_\infty:
|[\Delta L\quad\Delta h]|\le \eta |[L\quad h]|\Big\},
\end{array}
$$
where $|\cdot|$ denotes the componentwise absolute value, $Y\le Z$ means $y_{ij}\le z_{ij}$ for all $i,j$, and ${Y\over Z}$ is the  entrywise division defined by ${Y\over Z}:=[{y_{ij}\over z_{ij}}]$ and  ${\xi\over 0}$
 is interpreted as zero if $\xi=0$ and infinity otherwise. The subscripts in $\kappa$ characterize the type of the condition numbers.
 \end{defn}

Write $x=\phi([L\quad h])$ as $x=\psi(g)=\bar\psi(\bar g)$ for $g={\rm vec}([L\quad h])$, $\bar g={\rm vec}([\alpha L\quad \beta h])$
and $g=(\bar D\otimes I_m)\bar g$ , in which $\bar D={\rm diag}(\alpha^{-1} I_n, \beta^{-1})$.
By following the  concept and formula for the normwise condition number in \cite{cd,ge,gk}, the condition number formulae
take the following form
$$
\begin{array}l
\kappa_{\rm n}=\frac{\displaystyle\|\bar\psi\prime(\bar g)\|_2\|\bar g\|_2}{\displaystyle\|\bar\psi(\bar g)\|_2}={\displaystyle\|K_{\alpha, \beta}\|_2\|[L\quad h]\|_{\cal  F}\over\displaystyle \|x\|_2}={\displaystyle\|\bar K_{\alpha, \beta}\|_2\|[L\quad h]\|_{\cal  F}\over\displaystyle \|x\|_2},\\
\kappa_{\rm m}=\frac{\displaystyle\||\psi\prime(g)|\cdot|g|\|_{\infty}}{\displaystyle\|\psi(g)\|_{\infty}}={\displaystyle\||K_{L,h}|\cdot {\rm vec}([|L|\quad |  h|])\|_\infty\over\displaystyle \|x\|_\infty}={\displaystyle\||\overline K_{L, h}|\cdot {\rm vec}([|L|\quad |h|])\|_\infty\over\displaystyle \|x\|_\infty},\\

\kappa_{\rm c}=\left\|{\displaystyle|\psi\prime(g)|\cdot|g|\over\displaystyle |\psi(g)|} \right\|_{\infty}=\left\| {\displaystyle|K_{L, h}|{\rm vec}([|L|\quad | h|])\over\displaystyle |x|}\right\|_\infty=\left\|{\displaystyle|\overline K_{L, h}|\cdot {\rm vec}([|L|\quad |  h|])\over\displaystyle  |x|}\right\|_\infty,
\end{array}
$$
in which  $K_{L, h}, \overline K_{L,h}$ are defined in Theorems \ref{thm3.4}  and \ref{thm3.5}, respectively, and $K_{\alpha,\beta}=K_{L, h}(\bar D\otimes I_m)$, $\bar K_{\alpha,\beta}=\overline K_{L, h}(\bar D\otimes I_m)$.

\section{Compact formula and upper bounds of condition numbers}

Note that the explicit expression of three types of condition numbers all involve the Kronecker product, which makes the storage and computation very costly.
We provide compact formula and upper bounds that are Kronecker-product free and computable.

For mixed and componentwise condition numbers, it is obvious that
$$
\begin{array}{rl}
\kappa_{\rm m}\le  {\displaystyle\|| H_1|(|  L||x|+| h|)+|{\cal K}|| L|^T|t|\|_\infty\over\displaystyle \|x\|_\infty}=:\kappa_{\rm m}^{\rm U},\\
\end{array}
$$
and
$$
\begin{array}{rl}
\kappa_{\rm c} \le  \left\|{\displaystyle| H_1|(| L||x|+| h|)+|{\cal K}||L|^T|t| \over\displaystyle  |x|}\right\|_\infty=:\kappa_{\rm c}^{\rm U}.
\end{array}
$$
 For
the normwise condition number, a compact formula for the norm $\|K_{\alpha, \beta}\|_2$ is necessary, where the matrix $K_{L,h}$  associated with $K_{\alpha,\beta}$ maps the data space ${\mathbb R}^{m\times n} \times {\mathbb R}^m$ to ${\mathbb R}^n$ in the sense that
\begin{equation}
K_{\alpha,\beta}\Delta\bar g=K_{L,h}\Delta g=H_1\Delta Lx-{\cal K}\Delta L^Tt-H_1\Delta h,\label{c1}
\end{equation}
for $\Delta\bar g={\rm vec}([\alpha\Delta L \quad \beta\Delta h])$,   $\Delta g={\rm vec}([\Delta L \quad \Delta h])$,
and the   norm
\begin{equation}
\|K_{\alpha,\beta}\|_2=\sup\limits_{\Delta \bar g\not=0}{\|K_{\alpha,\beta}\Delta\bar g\|_2\over \|\Delta \bar g\|_2}=\sup\limits_{\Delta  g\not=0}{\|K_{L,h}\Delta g\|_2\over \|[\Delta L\quad \Delta h]\|_{\cal F}}.\label{c2}
\end{equation}
The following lemma is a slight revision of Lemma 2.1 in \cite{diao}, and it gives a weighted norm estimate of an operator similar to $K_{L,h}$.\\

\begin{lem}[\cite{diao}] \label{lem4.1} Given matrices  $V\in {\mathbb R}^{m\times n}$, $X\in {\mathbb R}^{n\times m}$, $Y\in {\mathbb R}^{n\times n}$ and vectors $s\in {\mathbb R}^n, t \in {\mathbb R}^m, u\in {\mathbb R}^m$ with two positive real numbers $\alpha$ and $\beta$, for the linear operator $l$ defined by
$$
l(V,u):=-XVs+YV^Tt+Xu,
$$
its operator weighted spectral norm can be characterized by
$$
\begin{array}{rl}
\|l\|_{2,\cal F}&=\sup\limits_{V\not=0, u\not=0}{\displaystyle\|l(V,u)\|_2\over\displaystyle \|[V\quad u]\|_{\cal F}}\\
&=\left\|\Big[-{\displaystyle \|s\|_2\over\displaystyle \beta}X \quad {\displaystyle \|t\|_2\over\displaystyle \alpha}Y\Big]
\left[\begin{array}{cc} c_1I_m- c_2{\displaystyle tt^T\over\displaystyle \|t\|_2^2}&{\displaystyle\beta\over \displaystyle\alpha}{\displaystyle ts^T\over\displaystyle \|t\|_2\|s\|_2}\\0&I_n\end{array}\right]\right\|_2\\
&\le  \Big({\displaystyle \|s\|_2\over \displaystyle\beta}\|X\|_2+{\displaystyle \|t\|_2\over\displaystyle \alpha}\|Y\|_2\Big)\sqrt{\max\{1, {\displaystyle\beta^2\over\displaystyle \alpha^2}+{\displaystyle 1\over\displaystyle \|s\|_2^2}\}+{\displaystyle \beta\over \displaystyle\alpha}}.
\end{array}
$$
where $c_1= \pm\sqrt{{\displaystyle\beta^2\over\displaystyle\alpha^2}+{\displaystyle 1\over\displaystyle \|s\|_2^2}}, c_2=  c_1\pm{\displaystyle 1\over\displaystyle \|s\|_2}$.
\end{lem}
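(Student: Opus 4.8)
The statement is a weighted-norm identity for the operator $l(V,u) = -XVs + YV^Tt + Xu$ over the data space equipped with $\|[V\ u]\|_{\cal F}^2 = \alpha^2\|V\|_F^2 + \beta^2\|u\|_2^2$. My plan is to vectorize everything and reduce the supremum to an ordinary matrix $2$-norm, so that the factored form on the right emerges from rewriting the relevant Gram matrix. Using ${\rm vec}(XVs) = (s^T\otimes X){\rm vec}(V)$ and ${\rm vec}(YV^Tt) = (t^T\otimes Y){\rm vec}(V^T) = (t^T\otimes Y)\Pi\,{\rm vec}(V)$ with $\Pi$ the vec-permutation matrix, I would write $l(V,u) = M\,{\rm vec}([V\ u])$ for an explicit $M$, and then $\|l\|_{2,\cal F} = \|M \bar D^{-1}\|_2$, where $\bar D = {\rm diag}(\alpha I_{mn}, \beta I_m)$ absorbs the weights. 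The target is to show this equals the $2$-norm of the displayed $2\times 2$ block product.

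**Key steps.** First, reduce to the case $u=0$ versus the mixed contribution: the term $Xu$ contributes a block $[\,0\ \ X/\beta\,]$ after weighting, while the $V$-dependent part contributes $-\frac1\alpha(s^T\otimes X)\Pi^{?} + \frac1\alpha(t^T\otimes Y)\Pi$. Second — and this is the crux — observe that for fixed "directions" the operator $V\mapsto -XVs + YV^Tt$ has a range that is at most two-dimensional in a suitable sense: writing $V = \frac{ts^T}{\|t\|_2\|s\|_2}\gamma + (\text{rest})$, one sees that only the components of $V$ along $s$ (for the first term) and along $t$ (for the second, via $V^T$) matter, plus the cross term where $V$ simultaneously aligns with both. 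Concretely I would parametrize $V$ by $Vs/\|s\|_2 =: p \in {\mathbb R}^m$ and $V^Tt/\|t\|_2 =: \tilde p \in {\mathbb R}^n$, note the compatibility constraint $p^Tt/\|t\|_2 = \tilde p^Ts/\|s\|_2$ (both equal $t^TVs/(\|s\|_2\|t\|_2)$), and compute the minimal $\|V\|_F$ achieving a given $(p,\tilde p)$ — this is a least-norm problem whose solution gives $\|V\|_F^2 = \|p\|_2^2 + \|\tilde p\|_2^2 - (t^TVs)^2/(\|s\|_2^2\|t\|_2^2)$, the subtracted term correcting the double-counting of the overlap. Third, substitute: $l(V,u) = -\|s\|_2 Xp + \|t\|_2 Y\tilde p + Xu$, and the weighted denominator becomes a quadratic form in $(p,\tilde p,u)$ with the cross-correction. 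Maximizing the Rayleigh quotient $\|l\|_2^2 / (\alpha^2\|V\|_F^2 + \beta^2\|u\|_2^2)$ over $(p,\tilde p,u)$ is then a finite-dimensional generalized eigenvalue problem whose "denominator" quadratic form, after a triangular change of variables, is exactly $I$; the change of variables is the upper-triangular block matrix appearing in the statement, with the scalars $c_1, c_2$ arising as the entries needed to diagonalize the $2\times 2$ block coupling the $t$-direction of $p$ with $\tilde p$ (equivalently, the $ts^T/(\|t\|_2\|s\|_2)$ coupling).

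**Main obstacle and closing.** The delicate point is the bookkeeping of the overlap term $(t^TVs)/(\|s\|_2\|t\|_2)$: it appears both in the constraint linking $p$ and $\tilde p$ and in the formula for $\|V\|_F^2$, and getting the signs and the precise shape of the triangular reduction matrix right — in particular producing $c_1 = \pm\sqrt{\beta^2/\alpha^2 + 1/\|s\|_2^2}$ and $c_2 = c_1 \pm 1/\|s\|_2$ rather than some other normalization — requires carefully tracking which scalar multiples of $t/\|t\|_2$ must be subtracted from $p$ to decouple it from $\tilde p$. Since the lemma is quoted from \cite{diao} as "a slight revision," I would in practice verify that the revision only rescales the roles of $\alpha$ and $\beta$ (here $\beta$ weights $s$ via $u$'s dual role and $\alpha$ weights $t$ via $Y$), cite the original computation for the structural identity, and only redo the weight-dependent constants. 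The final inequality is then immediate: bound the block-diagonal prefactor $[-\frac{\|s\|_2}{\beta}X\ \ \frac{\|t\|_2}{\alpha}Y]$ by $\frac{\|s\|_2}{\beta}\|X\|_2 + \frac{\|t\|_2}{\alpha}\|Y\|_2$ and the upper-triangular factor by its norm, estimating the latter crudely via $|c_1|, |c_2|$ and the off-diagonal $\frac{\beta}{\alpha}\cdot\frac{1}{\|s\|_2}$ (using $\|t s^T\|_2 = \|t\|_2\|s\|_2$) to get the radical $\sqrt{\max\{1, \beta^2/\alpha^2 + 1/\|s\|_2^2\} + \beta/\alpha}$.
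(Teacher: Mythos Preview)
Your approach is viable but takes a substantially different and more laborious route than the paper's. The paper does not re-derive the identity at all: it simply cites \cite{diao} for the formula with one particular sign choice $c_1^0=\sqrt{\beta^2/\alpha^2+1/\|s\|_2^2}$, $c_2^0=c_1^0+1/\|s\|_2$, and then observes that if $M$ denotes the upper-triangular block matrix in the statement, the product $MM^T$ works out to
\[
MM^T=\begin{bmatrix}(\beta^2/\alpha^2+1/\|s\|_2^2)I_m & 0\\ 0 & I_n\end{bmatrix}+\frac{\beta}{\alpha}\begin{bmatrix}0 & ts^T/(\|t\|_2\|s\|_2)\\ st^T/(\|t\|_2\|s\|_2) & 0\end{bmatrix},
\]
which is \emph{independent} of the sign choices in $c_1,c_2$. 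Since $\|ZM\|_2^2=\|ZMM^TZ^T\|_2$ for $Z=[-\|s\|_2\beta^{-1}X\ \ \|t\|_2\alpha^{-1}Y]$, the identity automatically extends to all four sign combinations, and the upper bound drops out immediately from $\|M\|_2^2=\|MM^T\|_2\le \max\{1,\beta^2/\alpha^2+1/\|s\|_2^2\}+\beta/\alpha$. Your plan---parametrize $V$ by $p=Vs/\|s\|_2$, $\tilde p=V^Tt/\|t\|_2$, compute the least-norm $V$, and diagonalize the resulting quadratic form---would reproduce the identity from scratch, but the ``delicate bookkeeping'' you flag is exactly what the paper bypasses. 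Also, your plan for the inequality (``crudely via $|c_1|,|c_2|$ and the off-diagonal'') would not obviously yield the stated radical; the clean bound really comes from reading off $\|MM^T\|_2$ rather than estimating $\|M\|_2$ entrywise.
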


\begin{proof}  In \cite{diao}, Diao proved the value of $\|l\|_{2,{\cal F}}$ for $c_1=c_1^0, c_2=c_2^0$, where $c_1^0=\sqrt{{\beta^2\over \alpha^2}+{1\over \|s\|_2^2}}$ and $c_2^0=c_1^0+{1\over \|s\|_2}$. Let
$$
Z={\mathcal L}\Big[-{1\over \beta}\|s\|_2X \quad {1\over \alpha}\|t\|_2Y\Big],\qquad M=\left[\begin{array}{cc} c_1I_m- c_2{\displaystyle tt^T\over\displaystyle \|t\|_2^2}&\quad{\displaystyle\beta\over \displaystyle\alpha}{\displaystyle ts^T\over\displaystyle \|t\|_2\|s\|_2}\\0&I_n\end{array}\right],
$$
where $\|ZM\|_2=\|ZMM^TZ^T\|_2^{1\over 2}$ with
$$
MM^T=\left[\begin{array}{cc}
({\displaystyle \beta^2\over \displaystyle \alpha^2}+{\displaystyle 1\over \displaystyle \|s\|_2^2})I_{m}&0\\0&I_n\end{array}\right]
+{\displaystyle \beta\over \displaystyle \alpha}\left[\begin{array}{cc}
0&{\displaystyle ts^T\over\displaystyle \|t\|_2\|s\|_2}\\{\displaystyle st^T\over\displaystyle \|t\|_2\|s\|_2}&0\end{array}\right].
$$
The matrix product $MM^T$ keeps the same for   $c_1=\pm c_1^0, c_2=c_1\pm {1\over \|s\|_2}$, and $\|M\|_2^2$  has the upper bound
$$
\|M\|_2^2=\|MM^T\|_2\le  \max\{1, {\beta^2\over \alpha^2}+{1\over \|s\|_2^2}\}+{\beta\over \alpha}.
$$
The assertion of the lemma then follows.\qed
\end{proof}

\begin{thm}\label{thm4.2}    With the notations  in Theorem \ref{thm3.4},  we have the compact expression for the normwise condition number
$$
\begin{array}{l}
\kappa_{\rm n}= \left\|\Big[-{\displaystyle \|x\|_2\over\displaystyle \beta}H_1 \quad {\displaystyle \|t\|_2\over\displaystyle \alpha}{\cal K}\Big]
\left[\begin{array}{cc} c_1I_m- c_2{\displaystyle tt^T\over\displaystyle \|t\|_2^2}&{\displaystyle\beta\over \displaystyle\alpha}{\displaystyle tx^T\over\displaystyle \|t\|_2\|x\|_2}\\0&I_n\end{array}\right]\right\|_2\cdot{\displaystyle\|[L\quad h]\|_{\cal F}\over\displaystyle \|x\|_2},\label{3.22}
\end{array}
$$
where $c_1=\pm\sqrt{{\beta^2\over \alpha^2}+{1\over \|x\|_2^2}}, c_2= c_1 \pm {1\over \|x\|_2}$. The upper bound of  $\kappa_{\rm n}$ is given by
$$
\begin{array}{rl}
\kappa_{\rm n}^{\rm U}&=  \Big[{\displaystyle \|x\|_2\over \displaystyle\beta}\|H_1\|_2+{\displaystyle \|t\|_2\over\displaystyle \alpha}\|{\cal K}\|_2\Big] \times{\displaystyle\|[L\quad h]\|_{\cal F}\over\displaystyle \|x\|_2}\sqrt{\max\{1, {\displaystyle\beta^2\over\displaystyle \alpha^2}+{\displaystyle 1\over\displaystyle \|x\|_2^2}\}+{\displaystyle \beta\over \displaystyle\alpha}}\\
&\le  \Big[{\displaystyle \|x\|_2\over \displaystyle\beta}(\|C_A^\dag\|_2+\|{\cal K}A^T\|_2)+\Big({\displaystyle 2\over\displaystyle \beta}+{\displaystyle1\over\displaystyle \alpha}\Big)\|{\cal K}\|_2\|t\|_2\Big]\\
&\qquad\qquad\qquad\qquad\qquad\qquad\times{\displaystyle\|[L\quad h]\|_{\cal F}\over\displaystyle \|x\|_2}\sqrt{\max\{1, {\displaystyle\beta^2\over\displaystyle \alpha^2}+{\displaystyle 1\over\displaystyle \|x\|_2^2}\}+{\displaystyle \beta\over \displaystyle\alpha}}.
\end{array}
$$
\end{thm}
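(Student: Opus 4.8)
The plan is to reduce everything to an application of Lemma~\ref{lem4.1}, using the perturbation formula from Theorem~\ref{thm3.5} (equivalently Theorem~\ref{thm3.4}) as the bridge. First I would recall from~(\ref{c1})--(\ref{c2}) that the operator $K_{L,h}$ acts on the data perturbation $[\Delta L\quad \Delta h]$ by
$$
K_{L,h}\,{\rm vec}([\Delta L\quad \Delta h])=H_1\Delta L\,x-{\cal K}\Delta L^T t-H_1\Delta h,
$$
and that $\kappa_{\rm n}=\|K_{\alpha,\beta}\|_2\,\|[L\quad h]\|_{\cal F}/\|x\|_2$, where $\|K_{\alpha,\beta}\|_2$ is exactly the weighted operator norm $\sup_{[\Delta L\ \Delta h]\neq 0}\|K_{L,h}{\rm vec}([\Delta L\quad \Delta h])\|_2/\|[\Delta L\quad \Delta h]\|_{\cal F}$. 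The point is that this has precisely the shape $l(V,u)=-XVs+YV^Tt+Xu$ of Lemma~\ref{lem4.1} once I set $V=\Delta L$, $u=-\Delta h$, $X=H_1$, $Y=-{\cal K}$ (the sign on $Y$ is harmless since $V^Tt$ and $-V^Tt$ range over the same set), $s=x$, and the vector $t$ of the lemma equal to the $t^T=[-r^T(\widetilde A\widetilde C^\dag)\ \ r^T]$ of Theorem~\ref{thm3.4}. Note $u$ and $V$ range independently over $\mathbb{R}^m$ and $\mathbb{R}^{m\times n}$ as $\Delta h,\Delta L$ do, and $\|[V\quad u]\|_{\cal F}=\|[\Delta L\quad \Delta h]\|_{\cal F}$, so the hypotheses match.

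Having identified the pieces, I would invoke Lemma~\ref{lem4.1} directly: it gives
$$
\|K_{\alpha,\beta}\|_2=\left\|\Big[-\tfrac{\|x\|_2}{\beta}H_1\quad \tfrac{\|t\|_2}{\alpha}{\cal K}\Big]
\left[\begin{array}{cc} c_1I_m-c_2\tfrac{tt^T}{\|t\|_2^2}&\tfrac{\beta}{\alpha}\tfrac{tx^T}{\|t\|_2\|x\|_2}\\0&I_n\end{array}\right]\right\|_2,
$$
with $c_1=\pm\sqrt{\beta^2/\alpha^2+1/\|x\|_2^2}$ and $c_2=c_1\pm 1/\|x\|_2$, and multiplying by $\|[L\quad h]\|_{\cal F}/\|x\|_2$ yields the stated closed form for $\kappa_{\rm n}$. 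For the first upper bound I would again just quote the inequality in Lemma~\ref{lem4.1}, namely $\|l\|_{2,{\cal F}}\le\big(\tfrac{\|x\|_2}{\beta}\|H_1\|_2+\tfrac{\|t\|_2}{\alpha}\|{\cal K}\|_2\big)\sqrt{\max\{1,\beta^2/\alpha^2+1/\|x\|_2^2\}+\beta/\alpha}$, and again scale by $\|[L\quad h]\|_{\cal F}/\|x\|_2$.

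The only genuine work is the second (fully Kronecker-free) upper bound, which requires bounding $\|H_1\|_2$. Recalling $H_1=2\rho^{-2}{\cal K}x t^T-[C_A^\dag\quad {\cal K}A^T]$ from Theorem~\ref{thm3.4}, the triangle inequality gives $\|H_1\|_2\le \|[C_A^\dag\quad {\cal K}A^T]\|_2+2\rho^{-2}\|{\cal K}x t^T\|_2$. For the rank-one term I would use $\|{\cal K}x t^T\|_2=\|{\cal K}x\|_2\|t\|_2\le \|{\cal K}\|_2\|x\|_2\|t\|_2$ together with $\rho^2=1+\|x\|_2^2\ge \|x\|_2^2$ (so $\rho^{-2}\|x\|_2\le 1/\|x\|_2\le\rho/\|x\|_2^2$... ) — more cleanly, $2\rho^{-2}\|x\|_2\le 2/\|x\|_2$ is not quite what is wanted; instead the bookkeeping that produces the displayed coefficients $\tfrac{2}{\beta}+\tfrac{1}{\alpha}$ is: plug $\|H_1\|_2\le \|C_A^\dag\|_2+\|{\cal K}A^T\|_2+2\rho^{-2}\|{\cal K}\|_2\|x\|_2\|t\|_2$ into $\tfrac{\|x\|_2}{\beta}\|H_1\|_2+\tfrac{\|t\|_2}{\alpha}\|{\cal K}\|_2$, observe $\tfrac{\|x\|_2}{\beta}\cdot 2\rho^{-2}\|x\|_2\le \tfrac{2}{\beta}$ since $\|x\|_2^2/\rho^2<1$, and collect the $\|{\cal K}\|_2\|t\|_2$ terms into $\big(\tfrac{2}{\beta}+\tfrac{1}{\alpha}\big)\|{\cal K}\|_2\|t\|_2$. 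This bookkeeping, plus re-attaching the common factor $\tfrac{\|[L\quad h]\|_{\cal F}}{\|x\|_2}\sqrt{\max\{1,\beta^2/\alpha^2+1/\|x\|_2^2\}+\beta/\alpha}$, is the main (and entirely routine) obstacle; the conceptual content is carried entirely by Lemma~\ref{lem4.1} and the perturbation expansion already established.
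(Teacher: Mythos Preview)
Your approach is exactly the paper's: identify the first-order perturbation map with the operator $l$ of Lemma~\ref{lem4.1}, read off the compact norm formula and first upper bound, then bound $\|H_1\|_2$ by the triangle inequality and use $\|x\|_2^2/\rho^2<1$ to collect the $\|{\cal K}\|_2\|t\|_2$ terms.

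There is one genuine slip in the identification. With your choices $V=\Delta L$, $X=H_1$, $Y=-{\cal K}$, $s=x$, $u=-\Delta h$, the operator $l(V,u)=-XVs+YV^Tt+Xu$ evaluates to $-H_1\Delta Lx-{\cal K}\Delta L^Tt-H_1\Delta h$, whereas the target from~(\ref{c1}) is $+H_1\Delta Lx-{\cal K}\Delta L^Tt-H_1\Delta h$. Your parenthetical that ``the sign on $Y$ is harmless since $V^Tt$ and $-V^Tt$ range over the same set'' does not rescue this: $V$ appears in \emph{both} the $XVs$ and $YV^Tt$ terms, so you cannot flip its sign in one occurrence only, and the two linear operators (hence their weighted norms and the resulting block formula, where your $Y=-{\cal K}$ would put $-\tfrac{\|t\|_2}{\alpha}{\cal K}$ in the second block) are not a priori equal. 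The paper takes $V=-\Delta L$, $Y={\cal K}$, $u=-\Delta h$, which matches (\ref{c1}) exactly and produces the displayed formula with $+\tfrac{\|t\|_2}{\alpha}{\cal K}$; with that correction the rest of your argument goes through verbatim.
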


\begin{proof}  By Theorem \ref{thm3.4} and (\ref{c1}), we have
$$\Delta x=K_{L,h}\Delta g=H_1(\Delta Lx-\Delta h)-{\cal K}\Delta L^Tt+{\cal O}(\|[\Delta L\quad \Delta h]\|_F^2),
$$
for $\Delta  g={\rm vec}([ \Delta L \quad  \Delta h])$.
Ignore the high-order terms and  in Lemma \ref{lem4.1} set
$$
X=H_1, \quad V=-\Delta L,\quad Y={\cal K}, \quad s=x, \quad u=-\Delta h,\quad t^T=\big[-r^T(\widetilde A\widetilde C^\dag) \quad r^T\big],
$$
for $r=Ax-b$,  then from (\ref{c2}) and Lemma \ref{lem4.1} we get the compact expression for $\|K_{\alpha,\beta}\|_2$ as
$$
\|K_{\alpha,\beta}\|_2= \left\|\Big[-{\displaystyle \|x\|_2\over\displaystyle \beta}H_1 \quad {\displaystyle \|t\|_2\over\displaystyle \alpha}{\cal K}\Big]
\left[\begin{array}{cc} c_1I_m- c_2{\displaystyle tt^T\over\displaystyle \|t\|_2^2}&{\displaystyle\beta\over \displaystyle\alpha}{\displaystyle tx^T\over\displaystyle \|t\|_2\|x\|_2}\\0&I_n\end{array}\right]\right\|_2,
$$
and the expression for $\kappa_{\rm n}$, which is bounded by
$$
\kappa_{\rm n}\le  \Big({\displaystyle \|x\|_2\over \displaystyle\beta}\|H_1\|_2+{\displaystyle \|t\|_2\over\displaystyle \alpha}\|{\cal K}\|_2\Big){\displaystyle\|[L\quad h]\|_{\cal F}\over\displaystyle \|x\|_2}\sqrt{\max\{1, {\displaystyle\beta^2\over\displaystyle \alpha^2}+{\displaystyle 1\over\displaystyle \|x\|_2^2}\}+{\displaystyle \beta\over \displaystyle\alpha}},
$$
where $\|H_1\|_2\le 2\|t\|_2\|{\cal K}\|_2/\rho+\|C_A^\dag\|_2+\|{\cal K}A^T\|_2$. The proof is then finished.\qed
\end{proof}

\begin{remark} The upper bound   $\kappa_{\rm n}^{\rm U}$ involves the computation of $\|C_A^\dag\|_2, \|{\cal K}A^T\|_2$ and $\|t\|_2$, which can be easily implemented
from the intermediate results for solving the TLSE problem. For example, for the matrix ${\cal K}=Q_2S_{11}^{-1}Q_2^T$ with $S_{11}=(AQ_2)^T(AQ_2)-\widetilde\sigma_{n-p+1}^2I_{n-p}$,
we note that $(AQ_2)^T(AQ_2)$  is just the $(n-p)\times (n-p)$ principle submatrix of $(\widetilde A\widetilde Q_2)^T(\widetilde A\widetilde Q_2)$ and hence
$$
\begin{array}{rl}
S_{11}&=(AQ_2)^T(AQ_2)-\widetilde\sigma_{n-p+1}^2I_{n-p}=[I_{n-p}\quad 0]\Big[(\widetilde A\widetilde Q_2)^T(\widetilde A\widetilde Q_2)
-\widetilde\sigma_{n-p+1}^2I_{n-p+1}\Big]\big[{I_{n-p}\atop 0}\big]\\
&=[I_{n-p}\quad 0]\widetilde V(\widetilde \Sigma^T\widetilde \Sigma-\widetilde\sigma_{n-p+1}^2I_{n-p+1})\widetilde V^T\big[{I_{n-p}\atop 0}\big]=\widetilde V_{11}\bar S\widetilde V_{11}^T,
\end{array}
$$
where the  $(n-p)\times (n-p)$ matrix $\bar S={\rm diag}(\widetilde\sigma_i^2-\widetilde\sigma_{n-p+1}^2)$, and $\widetilde V_{11}$ is the principal $(n-p)\times (n-p)$ submatrix of $\widetilde V$.  Its inverse $S_{11}^{-1}=\widetilde V_{11}^{-T}\bar S^{-1}\widetilde V_{11}^{-1}$, where $\widetilde V_{11}$ can be cheaply computed based on  the formula in \cite[Lemma 1]{dwx}.  For the vector $t$, we can formulate $\widetilde C^\dag$ based on    the Grevill's method \cite[Chapter 7, Section 5]{big} as
$$
\widetilde C^\dag=\left[\begin{array}c \left(I_n- {\omega^{-1}{x_{\rm C}x_{\rm C}^T}}\right)C^\dag \\
{\omega^{-1} x_{\rm C}^TC^\dag }\end{array}\right],\qquad \omega= 1+\|x_{\rm C}\|_2^2,
$$
where $C^\dag$ can be easily obtained from the QR factorization of $C$.
\end{remark}

\begin{remark}  As a check, we can recover the  perturbation bound and condition numbers for the standard TLS problem, by setting $C=\Delta C=0$ and $d=\Delta d=0$ in Theorem \ref{thm3.4}.
In this case,
$$
C_A^\dag=0_{n\times p},\qquad {\cal K}=(A^TA-\sigma_{n+1}^2I_n)^{-1}=:\bar P^{-1},\qquad t^T=[0_{1\times p}  \quad r^T],
$$
where $\sigma_{n+1}=\sigma_{n+1}([A\quad   b])$, and  by setting $C=0$ and $d=0$ in (\ref{1.50})-(\ref{1.5}), we derive that $A^Tr=\sigma_{n+1}^2 x$  and $\|r\|_2^2=\sigma_{n+1}^2\rho^2$,
from which $\rho^{-2}x={ A^Tr\over \|r\|_2^2}$ and
$$
H_1=\bar P^{-1}[0_{n\times p} \quad 2\rho^{-2}xr^T-A^T]=\bar P^{-1}[0_{n\times p} \quad -A^TH_0],
$$
with $H_0=I_q-{2rr^T\over \|r\|_2^2}$ being a Householder matrix, therefore $K_{L,h}=[0_{n\times p}\quad K_{\rm tls}]$ with $K_{\rm tls}=K_{\rm LJ}$ that is defined in (\ref{3.1.1}).
The absolute normwise condition number ${\kappa}_{\rm n}^{\rm abs}=\|K_{L,h}\|_2$  reduces to the one for standard TLS problem given in \cite{jl,lj}. Moreover,
the estimate in Theorem \ref{thm3.4} becomes
$$
\begin{array}{rl}
\Delta x&=K_{\rm tls}{\rm vec}([\Delta A  \quad \Delta b])+{\cal O}(\|[\Delta A  \quad \Delta b]\|_F^2)\\
&\approx -(A^TA-\sigma_{n+1}^2I_n)^{-1}A^TH_0(\Delta Ax-\Delta b)-(A^TA-\sigma_{n+1}^2I_n)^{-1}\Delta A^Tr.
\end{array}
$$
By taking 2-norm of $\Delta x$, we obtain
the relative perturbation result of the TLS solution as follows
\begin{equation}
{\|\Delta x\|_2\over \|x\|_2}\lesssim \kappa_b{\|\Delta b\|_2\over \|b\|_2}+\kappa_A {\|\Delta A\|_2\over \|A\|_2},\label{xxw}
\end{equation}
where $\kappa_b={\|b\|_2\over \|x\|_2}\Big\|(A^TA-\sigma_{n+1}^2I)^{-1}A^T\Big\|_2$ and
$$
\kappa_A={\|A\|_2\over \|x\|_2}\Big(\|r\|_2\Big\|(A^TA-\sigma_{n+1}^2I)^{-1}\Big\|_2+\|x\|_2\Big\|(A^TA-\sigma_{n+1}^2I)^{-1}A^T\Big\|_2\Big).
$$
 The perturbation estimate in (\ref{xxw}) is the same as the result in \cite{xxw}.

For   mixed and componentwise condition numbers,  $\overline K_{L,h}$ in Theorem \ref{thm3.5} reduces to
$$
\overline K_{L,h}=x^T\otimes[0_{n\times p}  \quad  \bar D]-\bar P^{-1}(I_n\otimes [0_{1\times p}\quad r^T])-[0_{n\times p}\quad \bar D],
$$
for $ \bar D=-\bar P^{-1}(A^T-2\rho^{-2}xr^T )$, and therefore  $\kappa_{\rm m}$ and $\kappa_{\rm c}$ become
$$
\kappa_{\rm m}^{\rm tls}={\displaystyle \left\||{\cal \bar M}||A|+|\bar D||b|\right\|_\infty\over \displaystyle \|x\|_\infty},\qquad
\kappa_{\rm c}^{\rm tls}=\left\|{\displaystyle |{\cal \bar M}||A|+|\bar D||b|\over \displaystyle |x|}\right\|_\infty,
$$
where ${\cal \bar M}=(x^{T}\otimes \bar D)-\bar P^{-1}(I_n\otimes r^T )$.  These results are exactly the ones from \cite{ds}.
\end{remark}

\section{Numerical experiments}

In this section, we present numerical examples to verify   our results. The following numerical tests are performed via MATLAB
  with machine precision $u = 2.22e-16$ in a laptop with Intel Core (TM) i5-5200U CPU. In all tests, we take $\alpha=\beta=1$ in the normwise condition
  number $\kappa_{\rm n}$ and its upper bound $\kappa_{\rm n}^{\rm U}$.\\

{\bf Example 5.1} In this example,  we construct  random TLSE problems in which the entries in the input matrices  have equilibratory magnitude. Let $[A\quad b]$ be a random matrix, and the  matrix $\widetilde C=[C\quad d]$ be generated by
$$
\widetilde C=Y[{D\quad 0}]Z^T,
$$
where $Y=I_p-2yy^T$, $Z=I_{n+1}-2zz^T$, and $y\in {\mathbb R}^q, z\in {\mathbb R}^{n+1}$ are random unit vectors, $D$ is a $p\times p$ diagonal matrix whose diagonal entries are uniformly  distributed in the interval (0, 1) except the last one.  The last diagonal entry is determined such that the condition number of $\widetilde C$ is $\kappa_{\widetilde C}$.

Consider random perturbations
 \begin{equation}
  [\Delta L\quad \Delta h]=10^{-8}*{\rm rand}(p+q,n+1),\label{4.1}
  \end{equation}
and  set
$$\epsilon_{1}={\|[\Delta L\quad\Delta h]\|_F\over \|[L\quad h]\|_F},\qquad \eta_{\Delta x}^{\rm rel}={\|\Delta x-K_{L,h}{\rm vec}([\Delta L\quad \Delta h])\|_2\over \|\Delta x\|_2},$$
where $\eta_{\Delta x}^{\rm rel}$ is used to measure the correctness of Theorem \ref{thm3.4},    $\epsilon_1\kappa_{\rm n}$ with $\alpha=\beta=1$ is used to estimate an upper bound of the relative forward error ${\|\Delta x\|_2\over \|x\|_2}$.

\renewcommand\tabcolsep{25.0pt}
\begin{table}[!htb]
\begin{center}
\caption{Comparisons of forward errors via QR-SVD algorithm and upper bounds for  the perturbed TLSE problem}
\label{Table1}
\begin{tabular}{*{28}{c}}
\hline
$  \kappa_{\widetilde C}$&$\frac{\|\Delta x\|_2}{\|x\|_2}$&$\eta_{\Delta x}^{\rm rel}$&  $\epsilon_{1}\kappa_{\rm n}$ & $\epsilon_{1}\kappa_{\rm n}^{\rm U}$ &$\|C_A^\dag\|_2$\\
 \hline
   $10^2$& 7.56e-7& 1.29e-6& 3.46e-5& 2.12e-4& 5.16e+2\\
  $10^2$& 8.11e-7& 1.11e-6& 2.99e-5& 8.11e-5& 1.90e+2\\\hline

$10^3$& 2.90e-5& 5.23e-5& 5.63e-4& 9.04e-3& 2.61e+4\\
$10^3$&1.16e-5&8.10e-6&6.98e-4&7.24e-4&5.37e+3\\\hline

 $10^4$& 5.92e-5& 2.49e-4& 2.71e-3& 5.61e-3& 1.72e+4\\
 $10^4$  & 1.95e-5& 1.06e-4& 4.55e-3& 1.17e-2& 3.27e+4\\\hline

$10^5$& 1.06e-3& 2.72e-3& 5.64e-2& 1.01e-1& 4.67e+5\\
 $10^5$ & 1.67e-3& 1.86e-3& 6.31e-2& 2.04e-1& 3.34e+5\\\hline
\end{tabular}\label{4.01}
\end{center}
\end{table}

In Table \ref{Table1}, we choose $p=5, n=15$, $q=20$. For each given $ \kappa_{\widetilde C}$, we generate two different TLSE problems, and use the stable QR-SVD algorithm to compute the solutions to  the original and perturbed problems. Relative forward error of the TLSE solution is compared with the estimated upper bounds via normwise condition numbers.

The tabulated results  for $\eta_{\Delta x}^{\rm rel}$ show that the first order estimate for $\Delta x$  in Theorem \ref{thm3.4}   is reasonable. It is also observed that    ($\epsilon_1\kappa_{\rm n}$)   is about one or two orders of magnitude larger than the  actual relative forward error
${\|\Delta x\|_2\over \|x\|_2}$, even the intermediate factor $\|C_A^\dag\|_2$ is   large and the problem becomes ill-conditioned.
 The upper bound  $\kappa_{\rm n}^{\rm U}$ is a tight estimate of the normwise condition number. This indicates that $\epsilon_1\kappa_{\rm n}^{\rm U}$ can be an alternative to estimate the forward error  of the  TLSE solution.\\

{\bf Example 5.2}  In this example, we test the accuracy of the TLSE solution via  the randomized algorithm in Algorithm \ref{alg2.1}, and also compare corresponding   relative  forward error of the solution
with the estimated  upper bounds  for the perturbed TLSE problem. Let
$$
[L\quad h]=Y\left[\begin{array}c \Lambda\\ 0\end{array}\right]Z^T\in {\mathbb R}^{(p+q)\times (n+1)},\quad Y=I_{p+q}-2yy^T,\quad Z=I_{n+1}-2zz^T,
$$
where $y\in {\mathbb R}^{p+q}, z\in {\mathbb R}^{(n+1)}$ are random unit vectors, $\Lambda={\rm diag}(n,\ldots, 2, 1, \delta)$ with a  positive   parameter $\delta$ close to 0.

Set $m=50$ or $100$,  $p=0.1m, n=0.2m$, and denote
$$
E_{\rm nwtls}={\|x_{\rm nwtls}-x_{\rm qr\_svd}\|_2\over \|x_{\rm qr\_svd}\|_2 },
$$
where $x_{\rm nwtls}, x_{\rm qr\_svd}$ are   solutions of the unperturbed TLSE problem computed via NWTLS  and QR-SVD methods, respectively, and for NWTLS method in Algorithm 2.1,
we take the weighting factor  $\epsilon=10^{-8}$ and the sample size $\ell=5$.

Generate random  perturbation as in (\ref{4.1}),  and compute the solutions to the perturbed problem via NWTLS  and QR-SVD algorithms, respectively.
 In Table \ref{Table2}, we list  numerical results   with respect to different parameters.
 It is observed that the randomized NWTLS algorithm can be as accurate as the QR-SVD method. One exception for $E_{\rm nwtls}$  is for $\delta=10^{-2}$, in which case the error $E_{\rm nwtls}$
 is not close enough to machine precision. That is because when $\delta=10^{-2}$,  the ratio
  between the subdominant and dominant  eigenvalues  of $(G^TG)^{-1}$ is not small enough to guarantee   solutions with higher accuracy.
 We also note that  the upper bounds $\epsilon_1\kappa_{\rm n}$, $\epsilon_1\kappa_{\rm n}^{\rm U}$ estimated via normwise condition number are sharp  to evaluate the forward errors by the NWTLS algorithm.

\renewcommand\tabcolsep{12pt}
\begin{table}[!htb]
\begin{center}
\caption{Comparisons of forward errors via NWTLS algorithm and upper bounds for  the perturbed TLSE problem}
\label{Table2}
\begin{tabular}{*{28}{c}}
\hline
$  \delta$&$m$&{\rm cond}$(\widetilde A\widetilde Q_2)$&$E_{\rm nwtls}$&${\|\Delta x_{\rm qr-svd}\|_2\over \|x_{\rm qr-svd}\|_2}$ &${\|\Delta x_{\rm nwtls}\|_2\over \|x_{\rm nwtls}\|_2}$& $\epsilon_{1}\kappa_{\rm n}$ & $\epsilon_{1}\kappa_{\rm n}^{\rm U}$ \\
 \hline
1e-2& 50& 1.01e+3& 4.21e-11& 2.83e-8& 1.42e-8& 4.54e-7& 6.04e-7\\
1e-2 & 100& 2.03e+3& 7.84e-11& 3.34e-8& 1.67e-8& 1.27e-6& 1.59e-6\\\hline
1e-3& 50& 1.00e+4& 4.90e-15& 2.19e-8& 1.10e-8& 5.41e-7& 6.91e-7\\
1e-3  & 100& 2.03e+4& 1.25e-14& 3.15e-8& 1.58e-8& 1.05e-6& 1.22e-6\\\hline
 1e-4& 50& 9.93e+4& 1.45e-15& 3.21e-8& 1.60e-8& 4.26e-7& 5.33e-7\\
1e-4& 100& 1.98e+5& 3.09e-15& 5.11e-8& 2.56e-8& 1.35e-6& 1.71e-6\\\hline
\end{tabular}
\end{center}
\end{table}

{\bf Example 5.3} In this example, we do some numerical experiments for the TLSE problem, based on the piecewise-polynomial data fitting problem in \cite[Chapter 16]{bv} and \cite{diao2}.

 Given $N$ points $(t_i, y_i)$ on the plane, we are seeking to find a piecewise-polynomial function $f(t)$ fitting the above set of the points, where
 $$
 f(t)=\left\{\begin{array}{ll}
 f_1(t),&t\le a,\\
 f_2(t),&t>a,
 \end{array}\right.
 $$
 with $a$ given, and $f_1(t)$ and $f_2(t)$ polynomials of degree three or less,
 $$
 f_1(t)=x_1+x_2t+x_3t^2+x_4t^3,\qquad  f_2(t)=x_5+x_6t+x_7t^2+x_8t^3,.
 $$
 The conditions that $f_1(a)=f_2(a)$ and $f_1'(a)=f_2'(a)$ are imposed, so that $f(t)$ is continuous and has  a continuous first derivative at $t=a$. Suppose that  the $N$ data are numbered so that $t_1,\ldots,t_M\le a$ and $t_{M+1},\ldots, t_N>a$. The conditions $f_1(a)-f_2(a)=0$ and $f_1'(a)-f_2'(a)=0$ leads to the equality constraint $Cx=d$ for $x=[x_1\quad x_2\quad \ldots\quad x_8]^T$ and
$$
C=\left[\begin{array}{cccccccc}
1&a&a^2&a^3&-1&-a&-a^2&-a^3\\
0&1&2a&3a^2&0&-1&-2a&-3a^2\end{array}\right],\quad d=\left[\begin{array}c 0\\0\end{array}\right].
$$
The vector $x$ that minimizes the  sum of squares of the prediction errors
 $$
 \sum\limits_{i=1}^M(f_1(t_i)-y_i)^2+\sum\limits_{i=M+1}^N(f_2(t_i)-y_i)^2,
 $$
 gives $\min_x\|Ax- b\|_2$, where
 $$
 A=\left[\begin{array}{cccccccc}
 1&t_1&t_1^2&t_1^3&0&0&0&0\\
 1&t_2&t_2^2&t_2^3&0&0&0&0\\
 \vdots&\vdots&\vdots&\vdots& \vdots&\vdots&\vdots&\vdots\\
 1&t_M&t_M^2&t_M^3&0&0&0&0\\
 0&0&0&0&1&t_{M+1}&t_{M+1}^2&t_{M+1}^3\\
 0&0&0&0&1&t_{M+2}&t_{M+2}^2&t_{M+2}^3\\
 \vdots&\vdots&\vdots&\vdots& \vdots&\vdots&\vdots&\vdots\\
  0&0&0&0&1&t_{N}&t_{N}^2&t_{N}^3
  \end{array}\right],\qquad b=\left[\begin{array}c y_1\\y_2\\\vdots\\y_M\\y_{M+1}\\\vdots\\ y_N\end{array}\right],
 $$
 and the matrix $A$ is of 50\% sparsity.
Take $M=200, N=400$,  and let samples $t_i\in [0, 1]$ randomly generated such that
$$
[t_1 ~\cdots~ t_M]=a*E_{1,M},\qquad
[t_{M+1}~\cdots~ t_N]=a*1_{N-M}^T+(1-a)*E_{1,N-M},
$$
where  $E_{s,t}$ is a random $s\times t$ matrix whose entries are uniformly distributed on the interval (0,1), and
$1_{N-M}$ is a column vector of all ones.
 For a randomly generated piecewise-polynomial function $f (t)$ with a predetermined $a$, we
compute the corresponding function value $y_i=f(t_i)$, and add  random componentwise perturbations on the data
as
$$
\Delta C=10^{-8}\cdot E_{2,8}\odot C, \qquad \Delta A=10^{-8}\cdot E_{N,8}\odot A,\qquad \Delta b=10^{-8}\cdot  E_{N,1}\odot b,
$$
and $\Delta d=0$, where $\odot$ denotes the entrywise multiplication. Set
$$
\begin{array}l
\epsilon_2=\min \{\epsilon: |\Delta L|\le \epsilon |L|,  |\Delta h|\le \epsilon |h|\},
\end{array}
$$
then the relative errors ${\|\Delta x\|_\infty\over \|x\|_\infty}$, $\|{\Delta x \over x}\|_\infty$   can be bounded by $\epsilon_2 \kappa_{\rm m}, \epsilon_2\kappa_{\rm c}$ respectively.

In Table \ref{Table3}, we list the actual forward errors and corresponding upper bounds estimated via different condition numbers. It's observed that
three types condition numbers multiplied by backward errors give good estimates of the forward error when $a\ge 0.5$, and they are about one or two orders of magnitude larger than the relative forward error, and the upper bounds of three types condition numbers are  tight as well.
 The relative forward errors   of the solution are almost about $10^{-7}$, except for the case $a=0.9$.
 This happens because in (\ref{2.1}),  the gap between the singular values $\overline\sigma_{n-p}= 1.65e-4$ and   $\widetilde\sigma_{n-p+1}=1.62e-4$
is very small, and the small gap not only  makes the matrix ${\cal K}$  to have large norm, but also leads to a small value $4.87e-5$ in
the  last component of $\widetilde Q_2\widetilde v_{n-p+1}$, which also makes the solution $x$, vectors $r$ and $t$ in Theorem \ref{thm3.4} to have large norms.
These large values will magnify the backward errors and hence affect the magnitude of  $\|\Delta x\|_2$.

On the other hand, we also note that when $a$ decreases, especially for $a=0.05$, the forward error estimated via normwise condition number is far away from
the true value of  the  forward error, while mixed and componentwise
condition numbers and their upper bounds can still estimate the forward error very tightly. The reason
is that when $a$ is small, $t_1,t_2,\cdots, t_N$ are of different magnitude, and  $L$  is badly scaled,  which causes different magnitude of  entries in $K_{L, h}$.
In the forward error estimated via normwise condition number, the condition number $\|K_{L,h}\|_2 $
is dominated by its high-magnitude  entries, while  for   mixed and componentwise
condition number-based estimates,     the high-magnitude entries in $|K_{L,h}|$ might be  restrained by small or zero entries in $[|L|\quad |h|]$, leading to  tight estimates for the forward error of the solution.

\renewcommand\tabcolsep{10.0pt}
\begin{table}[!htb]
\begin{center}
\caption{Comparisons of forward error and upper bounds for  the perturbed TLSE problem}
\label{Table3}
\begin{tabular}{*{28}{c}}
\hline
$a$&$\frac{\|\Delta x\|_2}{\|x\|_2}$&$\epsilon_{1}\kappa_{\rm n}$&$\epsilon_{1}\kappa_{\rm n}^{\rm U}$ &${\|\Delta x\|_\infty\over \|x\|_\infty}$ & $\epsilon_{\rm 2}\kappa_{\rm m}$ &$\epsilon_{\rm 2}\kappa_{\rm m}^{\rm U}$ &$\|{\Delta x \over x}\|_\infty$ & $\epsilon_{\rm 2}\kappa_{\rm c}$&$\epsilon_{\rm 2}\kappa_{\rm c}^{\rm U}$\\
 \hline
0.05& 1.87e-7& 5.69e-2& 5.48e-1& 1.87e-7& 5.42e-6& 6.41e-5& 1.87e-7& 9.01e-6& 1.28e-4\\
 0.1& 3.46e-7& 9.60e-3& 8.74e-2& 3.46e-7& 6.81e-6& 7.34e-5& 3.77e-7& 9.42e-6& 1.07e-4\\
 0.3& 2.04e-7& 1.28e-4& 4.84e-4& 2.05e-7& 6.80e-6& 2.00e-5& 2.22e-7& 8.09e-6& 2.30e-5\\
  0.5&4.24e-7&5.42e-5&1.07e-4&4.36e-7&9.73e-6&2.04e-5&4.42e-7&1.04e-5&2.44e-5\\
 0.7& 1.76e-7& 2.56e-4& 5.72e-4& 1.67e-7& 5.96e-5& 1.33e-4& 2.58e-7& 6.10e-5& 1.36e-4\\
 0.9&5.25e-4&2.45e-2&1.43e-1&5.25e-4&3.30e-3&7.82e-2&1.18e-3&7.83e-3&2.96e-1\\\hline
  \end{tabular}
\end{center}
\end{table}

\section{Conclusion and future work}
In this paper, by making use of a limit technique, we present the closed formula for the first order perturbation estimate of the TLSE solution, based on which     normwise, mixed and componentwise condition numbers of problem  TLSE are   derived.  Since these expressions all involve matrix Kronecker product operations, we  propose different skills to simplify the  expressions to improve the computational efficiency.  For the normwise condition number,  the alternative expression  and upper bound  in Theorem \ref{thm4.2} is more compact and
 shown to be tight for   TLSE problems with equilibratory input data.  For the computation of $\kappa_{\rm n}$, the main cost involves  formulating $H_1, {\cal K}$, computing an $n\times (n+m)$ matrix and evaluating its  2-norm  as well.

 For   mixed and  componentwise condition numbers, the Kronecker product operation in $K_{L,h}$ also increase the storage and computational cost.
 According to the numerical experiments,  the upper bounds   are very sharp and can be  suitable to   measure the conditioning of the TLSE problem, especially for sparse and badly-scaled TLSE problems.  In the computation of the upper bounds,  the main cost   is the formulation of matrices $H_1, {\cal K}$, the remaining cost involves the cheap matrix-vector multiplications and the infinity-norm evaluation of the vectors, therefore the computation and storage is more economical than the one for normwise condition number.

In the future work, we are going to investigate the perturbation result of the TLSE
problem, in which the different magnitudes of perturbations in input data
are taken into account. The proposed results  \cite{lcz}  are expected to deliver better estimates of forward errors
of the solution than the one based on normwise condition number.  The condition number of the following multidimensional TLSE problem
$$
\min_{E,F} \|[E\quad F]\|_{F}, \quad \mbox{subject\ to}\quad (A+E)X=B+F,\quad CX=D,\label{5.1}
$$
are also considered, where  $A, E\in {\mathbb R}^{q\times n}$, $B, F\in {\mathbb R}^{q\times d}$, $C\in {\mathbb R}^{p\times n},$   $D\in {\mathbb R}^{p\times d}$.

When $C, D$ are zero matrices, the   problem reduces to the multidimensional TLS problem,  whose condition numbers have been investigated in \cite{mzw,zmw}.
It is of interest to make use of the similar limit technique to investigate the condition numbers of the multidimensional TLSE problem.  However, the discussions on  solvability  conditions and  the explicit solution to the multidimensional TLSE problem haven't been seen in the literature.
  Compared to condition numbers of the (single-right-hand) TLSE problem (\ref{1.2}), the   multidimensional case is more complicated since the smallest singular value of $\widetilde A\widetilde Q_2$ might be multiple and the corresponding singular vector is not unique and lies in an invariant subspace.
   This brings difficulty in establishing the close relation between multidimensional TLSE and multidimensional WTLS problems.  Moreover those techniques in \cite{liu} can not be used.
    New tools are needed in dealing with the  subspace approximation problem.
 We will investigate these issues  in a separate paper \cite{ljw}.\\

  {\bf Acknowledgements.} The authors  would like to thank the handling editor Prof. Claude Brezinski and  two anonymous referees for  their   constructive comments and suggestions, which greatly improve the presentation
of this paper.



\end{document}